\DeclareSymbolFont{cyrletters}{OT2}{wncyr}{m}{n}
\DeclareMathSymbol{\Sha}{\mathalpha}{cyrletters}{"58}
\newcommand\cyr{\renewcommand\rmdefault{wncyr}
\renewcommand\sfdefault{wncyss}
\renewcommand\encodingdefault{OT2}
\normalfont\selectfont}
\DeclareTextFontCommand{\textcyr}{\cyr}
\theoremstyle{plain}
\newtheorem{theorem}{Theorem}[section]
\newtheorem*{theorem-nn}{Theorem}
\newtheorem{proposition}[theorem]{Proposition}
\newtheorem*{proposition-nn}{Proposition}
\newtheorem{corollary}[theorem]{Corollary}
\theoremstyle{definition}
\newtheorem{definition}[theorem]{Definition}
\newtheorem{remark}[theorem]{Remark}
\theoremstyle{remark}
\newcommand{\bZ}{\mathbbm{Z}}\newcommand{\bQ}{\mathbbm{Q}}
\newcommand{\bC}{\mathbbm{C}}\newcommand{\bG}{\mathbbm{G}}
\newcommand{\bF}{\mathbbm{F}}
\newcommand{\bA}{\mathbbm{A}}
\newcommand{\GL}{{\rm GL}}\newcommand{\SL}{{\rm SL}}
\newcommand{\PSL}{{\rm PSL}}
\newcounter{sub}
{\begin{list}{(\arabic{sub})}{\usecounter{sub}%
\setlength{\leftmargin}{2em}}}{\end{list}}
\title{Hasse norm principle for $M_{11}$ and $J_1$ extensions}
\author[A. Hoshi]{Akinari Hoshi}
\address{Department of Mathematics, Niigata University, Niigata 950-2181, Japan}
\email{hoshi@math.sc.niigata-u.ac.jp}
\author[K. Kanai]{Kazuki Kanai}
\address{General Education Program, National Institute of Technology, Kure College, Hiroshima 737-8506, Japan}
\email{k-kanai@kure-nct.ac.jp}
\author[A. Yamasaki]{Aiichi Yamasaki}
\address{Department of Mathematics, Kyoto University, Kyoto 606-8502, Japan}
\email{aiichi.yamasaki@gmail.com}
\thanks{{\it Key words and phrases.} 
Algebraic tori, norm one tori, Hasse norm principle, 
weak approximation, rationality problem.\\
This work was partially supported by JSPS KAKENHI Grant Numbers 
19K03418, 20H00115, 20K03511. 
}
\subjclass[2010]{Primary 11E72, 12F20, 13A50, 14E08, 20C10, 20G15.}
\begin{document}
\begin{abstract}
We give a necessary and sufficient condition for the Hasse norm principle 
for field extensions $K/k$ when the Galois groups ${\rm Gal}(L/k)$ 
of the Galois closure $L/k$ of $K/k$  are isomorphic to 
the Mathieu group $M_{11}$ of degree $11$ of order $7920$ 
or the Janko group $J_1$ of order $175560$ 
both with trivial Schur multiplier 
by determining $H^1(k,{\rm Pic}\, \overline{X})=0$ or $\bZ/2\bZ$  
for norm one tori $T=R^{(1)}_{K/k}(\bG_m)$ with a smooth 
$k$-compactification $X$ and $\overline{X}=X\times_k\overline{k}$. 
The result gives a first step towards understanding the all pictures of 
the Hasse norm principle for the $26$ sporadic simple groups. 
\end{abstract}

\maketitle

\tableofcontents

\section{Introduction}\label{seInt}

Let $k$ be a field, 
$\overline{k}$ be a fixed separable closure of $k$ and 
$\mathcal{G}={\rm Gal}(\overline{k}/k)$ be the absolute Galois group of $k$. 
%
Let $T$ be an algebraic $k$-torus, 
i.e. a group $k$-scheme with fiber product (base change) 
$T\times_k \overline{k}=
T\times_{{\rm Spec}\, k}\,{\rm Spec}\, \overline{k}
\simeq (\bG_{m,\overline{k}})^n$; 
$k$-form of the split torus $(\bG_m)^n$. 
Then there exists the minimal (canonical) finite Galois extension $K/k$ 
with Galois group $G={\rm Gal}(K/k)$ such that 
$T$ splits over $K$: $T\times_k K\simeq (\bG_{m,K})^n$ 
(see Voskresenskii \cite[page 27, Example 6]{Vos98}). 

Let $k$ be a global field, 
i.e. a number field (a finite extension of $\bQ$) 
or a function field of an algebraic curve over 
$\bF_q$ (a finite extension of $\bF_q(t))$. 
Let $T(k)$ be the group of $k$-rational points of $T$. 
Then $T(k)$ 
embeds into $\prod_{v\in V_k} T(k_v)$ by the diagonal map 
where 
$V_k$ is the set of all places of $k$ and 
$k_v$ is the completion of $k$ at $v\in V_k$. 
Let $\overline{T(k)}$ be the closure of $T(k)$  
in the product $\prod_{v\in V_k} T(k_v)$. 
The group 
\begin{align*}
A(T)=\left(\prod_{v\in V_k} T(k_v)\right)/\overline{T(k)}
\end{align*}
is called {\it the kernel of the weak approximation} of $T$. 
We say that {\it $T$ has the weak approximation property} if $A(T)=0$. 

Let $E$ be a principal homogeneous space (= torsor) under $T$.  
{\it Hasse principle holds for $E$} means that 
if $E$ has a $k_v$-rational point for all $k_v$, 
then $E$ has a $k$-rational point. 
The set $H^1(k,T)$ classifies all such torsors $E$ up 
to (non-unique) isomorphism. 
We define {\it the Shafarevich-Tate group}
\begin{align*}
\Sha(T)={\rm Ker}\left\{H^1(k,T)\xrightarrow{\rm res} \bigoplus_{v\in V_k} 
H^1(k_v,T)\right\}.
\end{align*}
Then 
Hasse principle holds for all torsors $E$ under $T$ 
if and only if $\Sha(T)=0$. 
\begin{theorem}[{Voskresenskii \cite[Theorem 5, page 1213]{Vos69}, 
\cite[Theorem 6, page 9]{Vos70}, see also \cite[Section 11.6, Theorem, page 120]{Vos98}}]\label{thV}
Let $k$ be a global field, 
$T$ be an algebraic $k$-torus, 
$X$ be a smooth $k$-compactification of $T$ and 
${\rm Pic}\,\overline{X}$ be the Picard group of $\overline{X}=X\times_k\overline{k}$.  
Then there exists an exact sequence
\begin{align*}
0\to A(T)\to H^1(k,{\rm Pic}\,\overline{X})^{\vee}\to \Sha(T)\to 0
\end{align*}
where $M^{\vee}={\rm Hom}(M,\bQ/\bZ)$ is the Pontryagin dual of $M$. 
Moreover, if $L$ is the splitting field of $T$ and $L/k$ 
is an unramified extension, then $A(T)=0$ and 
$H^1(k,{\rm Pic}\,\overline{X})^{\vee}\simeq \Sha(T)$. 
\end{theorem}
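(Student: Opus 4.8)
The plan is to reduce everything to the \emph{flasque resolution} attached to the compactification and then feed it into the local and global duality theorems for tori. First I would exploit the smooth $k$-compactification $X\supseteq T$: writing $\overline{X}=X\times_k\overline{k}$ and noting that $\overline{T}=T\times_k\overline{k}$ is open in $\overline{X}$ with complement a reduced divisor whose irreducible components are permuted by $\mathcal{G}$, the divisor exact sequence together with $\overline{k}[\overline{T}]^\times/\overline{k}^\times\simeq \widehat{T}$ and ${\rm Pic}\,\overline{T}=0$ yields a short exact sequence of $\mathcal{G}$-lattices
\[
0\to \widehat{T}\to \widehat{R}\to {\rm Pic}\,\overline{X}\to 0,
\]
in which $\widehat{R}$ (the group of boundary divisors) is a permutation lattice and ${\rm Pic}\,\overline{X}$ is flasque. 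Dualizing through the anti-equivalence between $k$-tori and $\mathcal{G}$-lattices gives an exact sequence of $k$-tori
\[
1\to S\to R\to T\to 1,
\]
where $R$ is quasi-trivial (since $\widehat{R}$ is a permutation lattice) and $\widehat{S}={\rm Pic}\,\overline{X}$.

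Next I would extract the two arithmetic groups from this resolution. Because $R$ is quasi-trivial, $H^1(k,R)=0$ and $H^1(k_v,R)=0$ for every $v$ (Hilbert 90 and Shapiro), and $R$ satisfies weak approximation. From the cohomology sequence of $1\to S\to R\to T\to 1$ one obtains $T(k_v)/{\rm im}\,R(k_v)\simeq H^1(k_v,S)$ and a surjection $T(k)\twoheadrightarrow H^1(k,S)$. Since $R(k_v)\to T(k_v)$ is open and $R(k)$ is dense in $\prod_v R(k_v)$, a diagram chase identifies
\[
A(T)\simeq {\rm coker}\Big(H^1(k,S)\to {\prod_v}' H^1(k_v,S)\Big).
\]
On the other hand, the snake lemma applied to the localization ladder of the same sequence, together with $\Sha^1(R)=\Sha^2(R)=0$ (the latter by Albert--Brauer--Hasse--Noether for the quasi-trivial $R$), gives $\Sha(T)=\Sha^1(T)\simeq\Sha^2(S)$.

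I would then assemble the sequence using global duality. The tail of the Poitou--Tate exact sequence for the torus $S$ reads
\[
H^1(k,S)\to {\prod_v}' H^1(k_v,S)\to H^1(k,\widehat{S})^\vee\to H^2(k,S)\to {\prod_v}' H^2(k_v,S),
\]
where local Tate--Nakayama duality $H^1(k_v,S)\simeq H^1(k_v,\widehat{S})^\vee$ underlies the middle dual. Reading off the kernel and image of $H^1(k,\widehat{S})^\vee\to H^2(k,S)$, whose image is exactly $\Sha^2(S)$, produces the short exact sequence
\[
0\to {\rm coker}\Big(H^1(k,S)\to {\prod_v}' H^1(k_v,S)\Big)\to H^1(k,{\rm Pic}\,\overline{X})^\vee\to \Sha^2(S)\to 0,
\]
which is the desired sequence after substituting $A(T)$ and $\Sha(T)$ from the previous step. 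The hard part is the weak-approximation identification of $A(T)$: it requires the openness of $R(k_v)\to T(k_v)$, the density furnished by quasi-triviality of $R$, and a careful passage to the restricted product so that the (a priori topological) cokernel matches the cohomological one; the inputs that ${\rm Pic}\,\overline{X}$ is flasque and that a smooth compactification exists (via toric geometry) are the other nontrivial ingredients.

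Finally, for the unramified case I would argue place by place. If $L/k$ is unramified, then for $w\mid v$ every local Galois group ${\rm Gal}(L_w/k_v)$ is cyclic (topologically generated by Frobenius), and at archimedean places it is cyclic of order $\le 2$ in any case. Since ${\rm Pic}\,\overline{X}$ is flasque, $\widehat{H}^{-1}({\rm Gal}(L_w/k_v),{\rm Pic}\,\overline{X})=0$, and by the $2$-periodicity of Tate cohomology for cyclic groups $H^1({\rm Gal}(L_w/k_v),{\rm Pic}\,\overline{X})=\widehat{H}^{-1}({\rm Gal}(L_w/k_v),{\rm Pic}\,\overline{X})=0$, whence $H^1(k_v,S)\simeq H^1(k_v,{\rm Pic}\,\overline{X})^\vee=0$ for all $v$. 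Then the restricted product vanishes, so $A(T)={\rm coker}(\cdots)=0$; moreover the localization map on $H^1(k,{\rm Pic}\,\overline{X})$ is zero, so $\Sha^1({\rm Pic}\,\overline{X})=H^1(k,{\rm Pic}\,\overline{X})$ and the middle term of the sequence becomes $H^1(k,{\rm Pic}\,\overline{X})^\vee\simeq\Sha^2(S)=\Sha(T)$, as claimed.
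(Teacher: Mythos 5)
Your outline is correct, but there is no internal proof to compare it with: the paper states Theorem~\ref{thV} as a quoted result of Voskresenskii \cite{Vos69}, \cite{Vos70}, \cite{Vos98} and proves nothing itself. What you have written is essentially the modern proof in the style of Colliot-Th\'el\`ene--Sansuc \cite{CTS77} and Sansuc \cite{San81} (the very generalization the paper mentions right after the statement): take the boundary-divisor sequence $0\to\widehat{T}\to\widehat{R}\to{\rm Pic}\,\overline{X}\to 0$ with $\widehat{R}$ a permutation lattice and ${\rm Pic}\,\overline{X}$ flasque, dualize to a flasque resolution $1\to S\to R\to T\to 1$, identify $A(T)\simeq{\rm coker}\bigl(H^1(k,S)\to\bigoplus_v H^1(k_v,S)\bigr)$ and $\Sha(T)\simeq\Sha^2(S)$ (using $H^1(k,R)=H^1(k_v,R)=0$ for the quasi-trivial $R$ and $\Sha^2(R)=0$ by Albert--Brauer--Hasse--Noether), and splice these into the $H^1$--$H^2$ segment of the Poitou--Tate sequence for $S$. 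Voskresenskii's original 1969/1970 argument predates the flasque-resolution formalism and works more directly with the lattice sequence and Tate--Nakayama duality; your route buys a clean separation of the arithmetic content into two black boxes (flasqueness of ${\rm Pic}\,\overline{X}$ and global duality for tori), and it makes the unramified refinement immediate: cyclic decomposition groups plus periodicity of Tate cohomology plus flasqueness kill $H^1(G_v,{\rm Pic}\,\overline{X})$, hence by local duality every $H^1(k_v,S)$, so the cokernel term vanishes and the middle map becomes an isomorphism. The two delicate points you flagged are indeed the right ones, and both lean on flasqueness of $S$ itself, not merely on the existence of some resolution: the topological identification of $A(T)$ needs ${\rm im}\bigl(R(k_v)\to T(k_v)\bigr)$ to be an open, hence closed, finite-index subgroup \emph{and} needs $H^1(k_v,S)=0$ for almost all $v$ so that the full product of local terms is really a finite direct sum; and at real places the Poitou--Tate local terms are Tate-modified groups, which your cyclicity argument covers. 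Since flasqueness of ${\rm Pic}\,\overline{X}$ and Poitou--Tate duality for tori are themselves substantial cited theorems, your text is a proof scheme rather than a self-contained proof, but the scheme is sound and yields exactly the asserted exact sequence together with the unramified refinement.
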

For the last assertion, see \cite[Theorem, page 120]{Vos98}.
It follows that 
$H^1(k,{\rm Pic}\,\overline{X})=0$ if and only if $A(T)=0$ and $\Sha(T)=0$, 
i.e. $T$ has the weak approximation property and 
Hasse principle holds for all torsors $E$ under $T$. 
Theorem \ref{thV} was generalized 
to the case of linear algebraic groups by Sansuc \cite{San81}.

Let $G$ be a finite group and $M$ be a $G$-lattice, 
i.e. finitely generated $\bZ[G]$-module which is $\bZ$-free as an abelian group.
We define 
\begin{align*}
\Sha^i_\omega(G,M):={\rm Ker}\left\{H^i(G,M)\xrightarrow{{\rm res}}\bigoplus_{g\in G}H^i(\langle g\rangle,M)\right\}\quad (i\geq 1) .
\end{align*}
The following is a theorem of Colliot-Th\'{e}l\`{e}ne and Sansuc \cite{CTS87}: 
\begin{theorem}[{Colliot-Th\'{e}l\`{e}ne and Sansuc \cite[Proposition 9.5 (ii)]{CTS87}, see also \cite[Proposition 9.8]{San81} and \cite[page 98]{Vos98}}]
Let $k$ be a field 
with ${\rm char}\, k=0$
and $K/k$ be a finite Galois extension 
with Galois group $G={\rm Gal}(K/k)$. 
Let $T$ be an algebraic $k$-torus which splits over $K$ with the character lattice 
$\widehat{T}={\rm Hom}(T,\bG_m)$ 
and 
$X$ be a smooth $k$-compactification of $T$. 
Then we have 
\begin{align*}
\Sha^2_\omega(G,\widehat{T})\simeq 
H^1(G,{\rm Pic}\, X_K)\simeq {\rm Br}(X)/{\rm Br}(k)
\end{align*}
where 
${\rm Br}(X)$ is the \'etale cohomological Brauer Group of $X$ 
$($it is the same as the Azumaya-Brauer group of $X$ 
for such $X$, see \cite[page 199]{CTS87}$)$. 
\end{theorem}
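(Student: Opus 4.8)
The plan is to establish the two stated isomorphisms separately: the first,
$\Sha^2_\omega(G,\widehat{T})\simeq H^1(G,{\rm Pic}\,X_K)$, by pure lattice cohomology, and the second, $H^1(G,{\rm Pic}\,X_K)\simeq {\rm Br}(X)/{\rm Br}(k)$, by the \'etale cohomology of $X$. First I would record the geometric exact sequence coming from the boundary of the compactification. Since $T$ splits over $K$, so does $X$, and the boundary $X_K\setminus T_K$ is a finite union of prime divisors which $G={\rm Gal}(K/k)$ permutes. Writing $P$ for the permutation $G$-lattice freely generated by these boundary components, the divisor map together with the vanishing of ${\rm Pic}\,T_K$ (the split torus $T_K$ being an open subvariety of affine space, with every invertible regular function a character up to a constant) yields a short exact sequence of $G$-lattices
\begin{align*}
0\to \widehat{T}\to P\to {\rm Pic}\,X_K\to 0 .
\end{align*}
This is a flasque resolution of $\widehat{T}$: by the theorem of Colliot-Th\'el\`ene and Sansuc on smooth compactifications of tori, ${\rm Pic}\,X_K$ is a flasque $G$-lattice.

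For the first isomorphism I would exploit two cohomological facts attached to this sequence. On the one hand, flasqueness gives $\widehat{H}^{-1}(C,{\rm Pic}\,X_K)=0$ for every cyclic $C\le G$, hence $H^1(C,{\rm Pic}\,X_K)=0$ by the periodicity of cyclic cohomology; feeding the sequence into the long exact sequence over $C$ therefore produces an injection $\alpha_C\colon H^2(C,\widehat{T})\hookrightarrow H^2(C,P)$. On the other hand, $\Sha^2_\omega(G,P)=0$ for the permutation lattice $P$: by Shapiro's lemma $H^2(G,\bZ[G/H])\simeq H^2(H,\bZ)\simeq {\rm Hom}(H,\bQ/\bZ)$, and a character of $H$ that restricts trivially to every cyclic subgroup is itself trivial, so restriction to cyclic subgroups is injective. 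Now chase the commutative square relating the map $\beta\colon H^2(-,\widehat{T})\to H^2(-,P)$ for $G$ and for each cyclic $C$. For $x\in H^2(G,\widehat{T})$ the relation ${\rm res}_C\circ\beta=\alpha_C\circ{\rm res}_C$ and the injectivity of $\alpha_C$ show that ${\rm res}_C(x)=0$ if and only if ${\rm res}_C(\beta(x))=0$; running over all $C$ gives that $x\in\Sha^2_\omega(G,\widehat{T})$ if and only if $\beta(x)\in\Sha^2_\omega(G,P)=0$. Hence
\begin{align*}
\Sha^2_\omega(G,\widehat{T})={\rm Ker}\bigl(H^2(G,\widehat{T})\to H^2(G,P)\bigr),
\end{align*}
and since $H^1(G,P)=0$ the long exact sequence for $G$ identifies this kernel with the image of the injective connecting map $H^1(G,{\rm Pic}\,X_K)\hookrightarrow H^2(G,\widehat{T})$, giving $\Sha^2_\omega(G,\widehat{T})\simeq H^1(G,{\rm Pic}\,X_K)$.

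For the second isomorphism I would use the Hochschild--Serre spectral sequence $H^p(\mathcal{G},H^q(\overline{X},\bG_m))\Rightarrow H^{p+q}(X,\bG_m)$ in \'etale cohomology, where $\mathcal{G}={\rm Gal}(\overline{k}/k)$. Because $X$ is smooth and projective and $\overline{X}$ is rational, one has ${\rm Br}(\overline{X})=H^2(\overline{X},\bG_m)=0$; because $T$, and hence $X$, carries a $k$-rational point, the structural map ${\rm Br}(k)\to {\rm Br}(X)$ is split injective. Using Hilbert 90 ($H^1(\mathcal{G},\overline{k}^{\times})=0$), the resulting low-degree exact sequence
\begin{align*}
0\to {\rm Pic}\,X\to ({\rm Pic}\,\overline{X})^{\mathcal{G}}\to {\rm Br}(k)\to {\rm Br}(X)\to H^1(\mathcal{G},{\rm Pic}\,\overline{X})\to H^3(\mathcal{G},\overline{k}^{\times})
\end{align*}
then has the rational point splitting off ${\rm Br}(k)$ and killing the edge map into $H^3(\mathcal{G},\overline{k}^{\times})$, so that passing to the cokernel yields ${\rm Br}(X)/{\rm Br}(k)\simeq H^1(\mathcal{G},{\rm Pic}\,\overline{X})=H^1(G,{\rm Pic}\,X_K)$; the last equality holds because $X$ already splits over $K$, so its geometric Picard group is defined over $K$ with $\mathcal{G}$-action factoring through $G$, and the inflation map is an isomorphism on $H^1$.

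The main obstacle is geometric rather than homological: the two facts that ${\rm Pic}\,X_K$ is flasque and that ${\rm Br}(\overline{X})=0$ are exactly where the smoothness and rationality of the compactification are consumed, and they require a local analysis of the boundary divisors and of the toric-type geometry of $X$. Once these two inputs are isolated as lemmas, both identifications are formal consequences of the two displayed exact sequences, the first by the diagram chase above and the second by extracting the cokernel of ${\rm Br}(k)\to{\rm Br}(X)$.
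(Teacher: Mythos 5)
The paper does not actually prove this statement---it is imported with citations to Colliot-Th\'el\`ene--Sansuc \cite[Proposition 9.5 (ii)]{CTS87} (see also \cite[Proposition 9.8]{San81}, \cite[page 98]{Vos98})---so the only meaningful comparison is with the proof in those sources, and your argument is correct and essentially reproduces it: the boundary-divisor sequence $0\to\widehat{T}\to P\to {\rm Pic}\,X_K\to 0$ with ${\rm Pic}\,X_K$ flasque, the vanishing $\Sha^2_\omega(G,P)=0$ via Shapiro, and the diagram chase giving $\Sha^2_\omega(G,\widehat{T})\simeq{\rm Ker}\bigl(H^2(G,\widehat{T})\to H^2(G,P)\bigr)\simeq H^1(G,{\rm Pic}\,X_K)$, followed by the Hochschild--Serre sequence together with the rational point $1\in T(k)$ and ${\rm Br}(\overline{X})=0$ for the Brauer identification. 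The only point worth noting is that your inflation step silently uses a third geometric input of the same nature as the two you isolate as lemmas, namely that ${\rm Pic}\,\overline{X}\simeq{\rm Pic}\,X_K$ with trivial $\mathcal{G}_K$-action (the geometric Picard group is ``split'' by $K$); this is likewise part of the cited results of Colliot-Th\'el\`ene--Sansuc and Voskresenskii and should be listed alongside flasqueness and ${\rm Br}(\overline{X})=0$.
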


In other words, for $G$-lattice $M=\widehat{T}$, 
we have 
$H^1(k,{\rm Pic}\, \overline{X})\simeq H^1(G,{\rm Pic}\, X_K)\simeq 
\Sha^2_\omega(G,M)\simeq {\rm Br}(X)/{\rm Br}(k)$ 
We also see  
${\rm Br}_{\rm nr}(k(X)/k)={\rm Br}(X)\subset {\rm Br}(k(X))$ 
(see Colliot-Th\'{e}l\`{e}ne and Sansuc \cite[Theorem 5.11]{CTS07}, 
Saltman \cite[Proposition 10.5]{Sal99}). 
Note that if $H^1(k,{\rm Pic}\, \overline{X})\neq 0$, then $X$ (resp. $T$) is not retract $k$-rational 
(see e.g. Hoshi, Kanai and Yamasaki \cite[Section 3]{HKY22}, \cite[Section 4]{HKY23}).\\ 

Let $k$ be a field and $K/k$ be a finite extension. 
{\it The norm one torus 
$R^{(1)}_{K/k}(\bG_m)$ of $K/k$} 
is the kernel of the norm map 
$R_{K/k}(\bG_m)\rightarrow \bG_m$ 
where 
$R_{K/k}$ is the Weil restriction 
(see \cite[page 37, Section 3.12]{Vos98}).
Such a torus $T=R^{(1)}_{K/k}(\bG_m)$ (resp. a torsor under $T=R^{(1)}_{K/k}(\bG_m)$) 
is biregularly isomorphic to the norm hypersurface 
$f(x_1,\ldots,x_n)=1$ (resp. $f(x_1,\ldots,x_n)=a$ for some $a\in k^\times$) 
where $f\in k[x_1,\ldots,x_n]$ is the polynomial of total 
degree $n$ defined by the norm map $N_{K/k}:K^\times\to k^\times$ 
(see \cite[page 53, Section 4.8, page 122, Example 4]{Vos98}). 
When $K/k$ is a finite Galois extension, 
we also have: 

\begin{theorem}[{Voskresenskii \cite[Theorem 7]{Vos70}, Colliot-Th\'{e}l\`{e}ne and Sansuc \cite[Proposition 1]{CTS77}}]
Let $k$ be a field and 
$K/k$ be a finite Galois extension with Galois group $G={\rm Gal}(K/k)$. 
Let $T=R^{(1)}_{K/k}(\bG_m)$ be the norm one torus of $K/k$ 
and $X$ be a smooth $k$-compactification of $T$. 
Then 
$H^1(H,{\rm Pic}\, X_K)\simeq H^3(H,\bZ)$ for any subgroup $H$ of $G$. 
In particular, 
$H^1(k,{\rm Pic}\, \overline{X})\simeq
H^1(G,{\rm Pic}\, X_K)\simeq H^3(G,\bZ)$ which is isomorphic to 
the Schur multiplier $M(G)$ of $G$.
\end{theorem}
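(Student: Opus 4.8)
The plan is to reduce the statement to a computation in the cohomology of the character lattice $\widehat{T}$, combining the boundary-divisor description of ${\rm Pic}\,\overline{X}$ with the $\Sha^2_\omega$-formula recorded above. Throughout I identify $H^1(k,{\rm Pic}\,\overline{X})$ with $H^1(G,{\rm Pic}\,X_K)$, the Galois action on ${\rm Pic}\,\overline{X}$ factoring through $G$, so that the whole problem becomes one about $G$-lattices.

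First I would pin down the character lattice. Dualizing the defining sequence $1\to T\to R_{K/k}(\bG_m)\xrightarrow{N}\bG_m\to 1$ of the norm one torus gives the exact sequence of $G$-lattices
\begin{align*}
0\to \bZ \xrightarrow{\ 1\mapsto N_G\ } \bZ[G] \to \widehat{T}\to 0, \qquad N_G=\sum_{g\in G}g,
\end{align*}
so that $\widehat{T}\simeq \bZ[G]/\bZ N_G$. Since $\bZ[G]$ is a free $\bZ[H]$-module for every subgroup $H\le G$, we have $H^i(H,\bZ[G])=0$ for all $i\ge1$, and the associated long exact sequence yields the dimension shift $H^i(H,\widehat{T})\simeq H^{i+1}(H,\bZ)$ for every $H\le G$ and every $i\ge1$; in particular $H^2(H,\widehat{T})\simeq H^3(H,\bZ)$.

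Next comes the geometric input. For a smooth compactification $X$ of $T$, the boundary $\overline{X}\setminus\overline{T}=\bigcup_i \overline{D_i}$ is a union of divisors permuted by $G$, and the localization sequence for divisor classes (using ${\rm Pic}\,\overline{T}=0$ and $\overline{k}[T]^\times/\overline{k}^\times=\widehat{T}$) gives
\begin{align*}
0\to \widehat{T}\to P\to {\rm Pic}\,\overline{X}\to 0, \qquad P=\bigoplus_i \bZ\,\overline{D_i},
\end{align*}
where $P$ is a permutation $G$-lattice. This exhibits ${\rm Pic}\,\overline{X}$ as the flasque lattice in a flasque resolution of $\widehat{T}$, and by the theorem of Colliot-Th\'{e}l\`{e}ne and Sansuc quoted above (applied over the base field $K^H$ for a subgroup $H$, so that $X\times_k K^H$ is a smooth $K^H$-compactification of $T\times_k K^H$ with base change $X_K$) one obtains $H^1(H,{\rm Pic}\,X_K)\simeq \Sha^2_\omega(H,\widehat{T})$ for every $H\le G$. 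This identification is the step that carries all the weight: it is exactly where the flasque-resolution machinery is needed (equivalently, the independence of $H^1(H,{\rm Pic}\,X_K)$ from the choice of $X$), and I expect it to be the main obstacle, since computing the connecting map $H^2(H,\widehat{T})\to H^2(H,P)$ directly would require controlling the stabilizers of the boundary divisors. Everything past this point is formal cohomology.

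It then remains to collapse $\Sha^2_\omega$ onto the full group $H^2$. For a cyclic group $\langle h\rangle$ the cohomology of $\bZ$ is $2$-periodic, so $H^3(\langle h\rangle,\bZ)\simeq H^1(\langle h\rangle,\bZ)={\rm Hom}(\langle h\rangle,\bZ)=0$; applying the dimension shift of the first step to $H=\langle h\rangle$ gives $H^2(\langle h\rangle,\widehat{T})\simeq H^3(\langle h\rangle,\bZ)=0$. Hence every restriction target in the definition of $\Sha^2_\omega(H,\widehat{T})$ vanishes, whence $\Sha^2_\omega(H,\widehat{T})=H^2(H,\widehat{T})$. Combining the three isomorphisms yields
\begin{align*}
H^1(H,{\rm Pic}\,X_K)\simeq \Sha^2_\omega(H,\widehat{T})= H^2(H,\widehat{T})\simeq H^3(H,\bZ)
\end{align*}
for every $H\le G$. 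Specializing to $H=G$ and using $H^3(G,\bZ)\simeq H^2(G,\bQ/\bZ)\simeq M(G)$ (from $0\to\bZ\to\bQ\to\bQ/\bZ\to0$ together with $H^i(G,\bQ)=0$ for $i\ge1$) gives the final assertion.
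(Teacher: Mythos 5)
Your proof is correct, but there is nothing in the paper to compare it against line by line: the paper does not prove this theorem, it quotes it from Voskresenskii \cite{Vos70} and Colliot-Th\'el\`ene--Sansuc \cite{CTS77}. What you have written is precisely the standard argument underlying those citations, and both of its nontrivial inputs already appear elsewhere in the paper: your exact sequence $0\to\bZ\xrightarrow{1\mapsto N_G}\bZ[G]\to\widehat{T}\to 0$ is the sequence $0\to\bZ\xrightarrow{\varepsilon^\circ}\bZ[G/H]\to J_{G/H}\to 0$ (with $H=1$) used in the paper's proof of its Corollary, and the identification $H^1(H,{\rm Pic}\,X_K)\simeq\Sha^2_\omega(H,\widehat{T})$ is the quoted Colliot-Th\'el\`ene--Sansuc theorem, which you apply correctly after base change to $K^H$ (your intermediate paragraph on the boundary-divisor sequence is motivation rather than a load-bearing step, since you invoke that theorem as a black box). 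The dimension shift $H^i(H,\widehat{T})\simeq H^{i+1}(H,\bZ)$, the vanishing $H^2(\langle h\rangle,\widehat{T})\simeq H^3(\langle h\rangle,\bZ)=0$ on cyclic subgroups that collapses $\Sha^2_\omega$ onto $H^2$, and the final identification $H^3(G,\bZ)\simeq H^2(G,\bQ/\bZ)\simeq M(G)$ are all sound. The one caveat is generality: the Colliot-Th\'el\`ene--Sansuc theorem as quoted in the paper assumes ${\rm char}\,k=0$, whereas the statement allows an arbitrary field $k$; so as written your argument establishes the characteristic-zero case, and the full generality is what the original references supply.
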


Let $k$ be a global field, 
$K/k$ be a finite extension and 
$\bA_K^\times$ be the idele group of $K$. 
We say that {\it the Hasse norm principle holds for $K/k$} 
if $(N_{K/k}(\bA_K^\times)\cap k^\times)/N_{K/k}(K^\times)=1$ 
where $N_{K/k}$ is the norm map. 
Ono \cite{Ono63} established the relationship 
between the Hasse norm principle for $K/k$ 
and the Hasse principle for all torsors under 
the norm one torus $T=R^{(1)}_{K/k}(\bG_m)$ of $K/k$: 
\begin{theorem}[{Ono \cite[page 70]{Ono63}, see also Platonov \cite[page 44]{Pla82}, Kunyavskii \cite[Remark 3]{Kun84}, Platonov and Rapinchuk \cite[page 307]{PR94}}]\label{thOno}
Let $k$ be a global field and $K/k$ be a finite extension. 
Let $T=R^{(1)}_{K/k}(\bG_m)$ be the norm one torus of $K/k$.  
Then 
\begin{align*}
\Sha(T)\simeq (N_{K/k}(\bA_K^\times)\cap k^\times)/N_{K/k}(K^\times).
\end{align*}
In particular, $\Sha(T)=0$ if and only if 
the Hasse norm principle holds for $K/k$. 
\end{theorem}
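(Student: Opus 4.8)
The plan is to identify both the global group $H^1(k,T)$ and the local groups $H^1(k_v,T)$ with explicit quotients of multiplicative groups by norm subgroups, and then to recognize the kernel of the localization map as the group of everywhere-local norms modulo global norms.

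First I would exploit the defining exact sequence of the norm one torus,
\[
1 \to T \to R_{K/k}(\bG_m) \xrightarrow{\,N_{K/k}\,} \bG_m \to 1,
\]
which is exact as a sequence of $k$-tori because over $\overline{k}$ the norm map becomes the surjective multiplication map $(\bG_m)^n \to \bG_m$, $n=[K:k]$. Passing to the long exact sequence in cohomology of $\mathcal{G}={\rm Gal}(\overline{k}/k)$, and using Shapiro's lemma in the form $H^i(k,R_{K/k}(\bG_m)) \simeq H^i(K,\bG_m)$ (the character lattice of $R_{K/k}(\bG_m)$ being the induced module $\bZ[\mathcal{G}/\mathcal{G}_K]$, $\mathcal{G}_K={\rm Gal}(\overline{k}/K)$) together with Hilbert's Theorem 90 $H^1(K,\bG_m)=0$, I obtain
\[
K^\times \xrightarrow{\,N_{K/k}\,} k^\times \to H^1(k,T) \to 0,
\]
hence a canonical isomorphism $H^1(k,T) \simeq k^\times/N_{K/k}(K^\times)$. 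Since Weil restriction commutes with base change, $T\times_k k_v \simeq R^{(1)}_{(K\otimes_k k_v)/k_v}(\bG_m)$ and $K\otimes_k k_v \simeq \prod_{w\mid v} K_w$ splits over the places $w$ of $K$ above $v$; running the same argument over $k_v$ then yields $H^1(k_v,T) \simeq k_v^\times/\prod_{w\mid v} N_{K_w/k_v}(K_w^\times)$.

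Assembling these identifications, the localization map defining $\Sha(T)$ becomes the natural map
\[
k^\times/N_{K/k}(K^\times) \longrightarrow \bigoplus_{v\in V_k} k_v^\times\Big/\prod_{w\mid v} N_{K_w/k_v}(K_w^\times),
\]
so that $\Sha(T)$ is represented by those $a\in k^\times$ which are local norms at every place $v$, taken modulo $N_{K/k}(K^\times)$. It then remains to identify this set of everywhere-local norms with $N_{K/k}(\bA_K^\times)\cap k^\times$. One inclusion is immediate, since the norm of an idele is visibly a local norm at each place.

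For the converse I would, given $a\in k^\times$ that is a local norm at every $v$, assemble a global idele in $\bA_K^\times$ whose norm is $a$; this is the only genuinely delicate point, because an idele is required to have unit components at all but finitely many places. The resolution is that only finitely many $v$ ramify in $K$ or satisfy $a\notin\mathcal{O}_{k_v}^\times$, and at every other $v$ the surjectivity of the norm $\mathcal{O}_{K_w}^\times \to \mathcal{O}_{k_v}^\times$ for an unramified local extension lets me realize $a$ as the norm of a local unit; choosing the remaining finitely many components from the given local factorizations produces the desired idele. This yields $\Sha(T)\simeq (N_{K/k}(\bA_K^\times)\cap k^\times)/N_{K/k}(K^\times)$, and the concluding ``in particular'' statement is then immediate from the definition of the Hasse norm principle. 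The main obstacle is thus not the cohomological computation, which is formal, but this adelic comparison with its almost-everywhere unit bookkeeping.
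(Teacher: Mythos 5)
Your proposal is correct, but there is nothing in the paper to compare it against: Theorem \ref{thOno} is quoted as a known result (attributed to Ono, with proofs in Platonov \cite{Pla82}, Kunyavskii \cite{Kun84} and Platonov--Rapinchuk \cite{PR94}), and the paper supplies no proof of its own. Your argument is essentially the standard one from those references: the long exact cohomology sequence of $1\to T\to R_{K/k}(\bG_m)\xrightarrow{N_{K/k}}\bG_m\to 1$, combined with Shapiro's lemma and Hilbert 90, identifies $H^1(k,T)\simeq k^\times/N_{K/k}(K^\times)$ and, since Weil restriction commutes with base change and $K\otimes_k k_v\simeq\prod_{w\mid v}K_w$, identifies $H^1(k_v,T)\simeq k_v^\times/\prod_{w\mid v}N_{K_w/k_v}(K_w^\times)$; the functoriality of the connecting maps under restriction then exhibits $\Sha(T)$ as the group of everywhere-local norms in $k^\times$ modulo global norms, and the adelic comparison with $N_{K/k}(\bA_K^\times)\cap k^\times$ is settled exactly as you say, by surjectivity of the norm on units in unramified local extensions (only finitely many places are ramified, archimedean, or in the support of the given element, so the required idele exists). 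Two small points worth recording: the identification of the localization map with the natural map $k^\times/N_{K/k}(K^\times)\to k_v^\times/\prod_{w\mid v}N_{K_w/k_v}(K_w^\times)$ rests on compatibility of the two long exact sequences under restriction, which is routine but should be stated; and the hypothesis ``finite extension'' should be read as ``finite separable extension,'' since otherwise $R_{K/k}(\bG_m)$ is not a torus and the decomposition $K\otimes_k k_v\simeq\prod_{w\mid v}K_w$ you use would fail --- this is implicit in the paper's setup as well.
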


Hasse \cite[Satz, page 64]{Has31} originally proved that 
the Hasse norm principle holds for any cyclic extension $K/k$ 
but does not hold for bicyclic extension $\bQ(\sqrt{-39},\sqrt{-3})/\bQ$. 
For finite Galois extensions $K/k$, Tate \cite{Tat67} gave the following theorem:


\begin{theorem}[{Tate \cite[page 198]{Tat67}}]\label{thTate}
Let $k$ be a global field, $K/k$ be a finite Galois extension 
with Galois group $G={\rm Gal}(K/k)$. 
Let $V_k$ be the set of all places of $k$ 
and $G_v$ be the decomposition group of $G$ at $v\in V_k$. 
Then 
\begin{align*}
(N_{K/k}(\bA_K^\times)\cap k^\times)/N_{K/k}(K^\times)\simeq 
{\rm Coker}\left\{\bigoplus_{v\in V_k}\widehat H^{-3}(G_v,\bZ)\xrightarrow{\rm cores}\widehat H^{-3}(G,\bZ)\right\}
\end{align*}
where $\widehat H$ is the Tate cohomology. 
In particular, the Hasse norm principle holds for $K/k$ 
if and only if the restriction map 
$H^3(G,\bZ)\xrightarrow{\rm res}\bigoplus_{v\in V_k}H^3(G_v,\bZ)$ 
is injective. 
\end{theorem}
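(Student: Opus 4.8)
The plan is to derive Tate's formula from global class field theory by means of Tate--Nakayama duality, working with the short exact sequence of $G={\rm Gal}(K/k)$-modules
\begin{align*}
1\to K^\times\to \bA_K^\times\to C_K\to 1,
\end{align*}
where $C_K=\bA_K^\times/K^\times$ is the idele class group of $K$. First I would identify the obstruction group cohomologically in degree $0$. Since $(K^\times)^G=k^\times$ and the norm map $N_G$ is $N_{K/k}$, one has $\widehat H^0(G,K^\times)=k^\times/N_{K/k}(K^\times)$; Shapiro's lemma applied to the isomorphism $\bA_K^\times\simeq\bigoplus_{v\in V_k}{\rm Ind}_{G_v}^G K_w^\times$ of $G$-modules (with $w$ a place of $K$ above $v$) gives $\widehat H^0(G,\bA_K^\times)\simeq\bigoplus_{v\in V_k}k_v^\times/N_{K_w/k_v}(K_w^\times)$. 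The natural map carries the class of $a\in k^\times$ to the family of its local classes, so its kernel consists exactly of those $a$ that are local norms at every place; by the local norm theorem this is precisely $(N_{K/k}(\bA_K^\times)\cap k^\times)/N_{K/k}(K^\times)$.

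Next I would feed this into the long exact sequence of Tate cohomology attached to the displayed sequence. Hilbert 90 gives $H^1(G,K^\times)=0$, and the relevant segment is
\begin{align*}
\widehat H^{-1}(G,\bA_K^\times)\xrightarrow{\ \al\ }\widehat H^{-1}(G,C_K)\to\widehat H^0(G,K^\times)\to\widehat H^0(G,\bA_K^\times).
\end{align*}
The kernel found above is the image of the connecting map, hence isomorphic to ${\rm Coker}\,\al$. Shapiro's lemma again gives $\widehat H^{-1}(G,\bA_K^\times)\simeq\bigoplus_{v\in V_k}\widehat H^{-1}(G_v,K_w^\times)$. I would then invoke Tate--Nakayama duality: cup product with the global fundamental class $u_{K/k}\in H^2(G,C_K)$ yields $\widehat H^{-1}(G,C_K)\simeq\widehat H^{-3}(G,\bZ)$, while cup product with the local fundamental classes $u_v\in H^2(G_v,K_w^\times)$ yields $\widehat H^{-1}(G_v,K_w^\times)\simeq\widehat H^{-3}(G_v,\bZ)$.

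The crux, and the step I expect to be the main obstacle, is to verify that under these identifications the map $\al$ becomes the corestriction $\bigoplus_{v}\widehat H^{-3}(G_v,\bZ)\xrightarrow{\rm cores}\widehat H^{-3}(G,\bZ)$. This depends on the basic compatibility of class field theory between the local and global fundamental classes---that ${\rm res}_{G_v}u_{K/k}$ is the image of $u_v$ under $K_w^\times\hookrightarrow C_K$, equivalently that the global invariant equals $\sum_v{\rm inv}_v$---combined with the standard fact that cup product with a fundamental class intertwines the inclusion-induced map on $\bA_K^\times$ with corestriction in the $\bZ$-variable. Granting this, the previous two paragraphs yield
\begin{align*}
(N_{K/k}(\bA_K^\times)\cap k^\times)/N_{K/k}(K^\times)\simeq{\rm Coker}\Bigl\{\bigoplus_{v\in V_k}\widehat H^{-3}(G_v,\bZ)\xrightarrow{\rm cores}\widehat H^{-3}(G,\bZ)\Bigr\}.
\end{align*}

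Finally, for the ``in particular'' clause I would use the perfect cup-product pairing $\widehat H^{n}(G,\bZ)\times\widehat H^{-n}(G,\bZ)\to\widehat H^0(G,\bZ)\hookrightarrow\bQ/\bZ$ valid for the finite group $G$, under which corestriction on $\widehat H^{-3}(-,\bZ)$ is adjoint to restriction on $\widehat H^{3}(-,\bZ)=H^3(-,\bZ)$. Hence the above cokernel vanishes---that is, the Hasse norm principle holds for $K/k$---if and only if the restriction map ${\rm res}\colon H^3(G,\bZ)\to\bigoplus_{v\in V_k}H^3(G_v,\bZ)$ is injective, recalling that $\widehat H^{-3}(G,\bZ)\simeq H^3(G,\bZ)$ is the Schur multiplier of $G$.
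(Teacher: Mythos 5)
This theorem is quoted in the paper from Tate's article \cite{Tat67} without proof, so there is no internal argument to compare against; your proof is the standard one---essentially Tate's own---and it is correct: the identification of the knot group as ${\rm Coker}\,\al$ via the long exact Tate-cohomology sequence of $1\to K^\times\to\bA_K^\times\to C_K\to 1$, the degree shifts by cup product with the local and global fundamental classes, the projection-formula compatibility (using ${\rm res}_{G_v}u_{K/k}=\iota_*u_v$) that turns $\al$ into corestriction, and the perfect pairing making $\widehat H^{-3}(-,\bZ)$ the Pontryagin dual of $H^3(-,\bZ)$, under which corestriction is adjoint to restriction, are exactly the classical steps. One imprecision you should fix: $\bA_K^\times$ is \emph{not} isomorphic to $\bigoplus_{v}{\rm Ind}_{G_v}^G K_w^\times$ as a $G$-module (it is a restricted direct product); the isomorphism $\widehat H^n(G,\bA_K^\times)\simeq\bigoplus_{v}\widehat H^n(G_v,K_w^\times)$ is obtained by writing $\bA_K^\times$ as a direct limit of the $S$-idele groups over finite sets $S$ of places, applying Shapiro's lemma to each semi-local factor, and using that the unit groups at unramified places are cohomologically trivial, so that the limit collapses to the stated direct sum.
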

If $G\simeq C_n$ is cyclic, then 
$\widehat H^{-3}(G,\bZ)\simeq H^3(G,\bZ)\simeq H^1(G,\bZ)={\rm Hom}(G,\bZ)=0$ 
and hence the Hasse's original theorem follows. 

The Hasse norm principle for Galois extensions $K/k$ 
was investigated by Gerth \cite{Ger77}, \cite{Ger78} and 
Gurak \cite{Gur78a}, \cite{Gur78b}, \cite{Gur80} 
(see also \cite[pages 308--309]{PR94}). 
For non-Galois extension $K/k$, 
the Hasse norm principle was investigated by 
Bartels \cite{Bar81a} ($[K:k]=p$; prime), 
\cite{Bar81b} (${\rm Gal}(L/k)\simeq D_n$), 
Voskresenskii and Kunyavskii \cite{VK84} (${\rm Gal}(L/k)\simeq S_n$) and 
Macedo \cite{Mac20} (${\rm Gal}(L/k)\simeq A_n$) 
where $L/k$ be the Galois closure of $K/k$, 
Macedo and Newton \cite{MN22}, 
Hoshi, Kanai and Yamasaki \cite{HKY22}, \cite{HKY23}.\\


Let $M_{11}$ be the Mathieu group of degree $11$ 
and $J_1$ be the Janko group 
(see Dixon and Mortimer \cite[Chapter 6]{DM96}, 
Gorenstein, Lyons and Solomon \cite[Chapter 5]{GLS98}). 
Note that the groups $M_{11}$ and $J_1$ are two of 
the $26$ sporadic simple groups, and 
$|M_{11}|=7920=2^4\cdot 3^2\cdot 5\cdot 11$ and 
$|J_1|=175560=2^3\cdot 3\cdot 5\cdot 7\cdot 11\cdot 19$. 
For $G\simeq M_{11}$ and $J_1$, 
we have the trivial Schur multiplier 
$H^2(G,\bC^\times)\simeq H^3(G,\bZ)=0$ although 
$H^2(M_{12},\bC^\times)\simeq \bZ/2\bZ$ and 
$H^2(M_{22},\bC^\times)\simeq \bZ/12\bZ$. 
This is one of the reasons why we can reach the answer of 
the problem (see Section \ref{S3}). 
We determine $H^1(k,{\rm Pic}\, \overline{X})$ 
for norm one tori $T=R^{(1)}_{K/k}(\bG_m)$, 
when the Galois group of the Galois closure $L/k$ of $K/k$  
is isomorphic to the Mathieu group $M_{11}$ or the Janko group $J_1$, 
which depends on $G={\rm Gal}(L/k)\simeq M_{11}$ or $J_1$, 
and $H={\rm Gal}(L/K)\leq G$ up to conjugacy.  
Let ${\rm Syl}_2(G)$ be a $2$-Sylow subgroup of $G$. 

%
\begin{theorem}\label{thmain1}
Let $k$ be a field, 
$K/k$ be a separable field extension of degree $n$ 
and $L/k$ be the Galois closure of $K/k$. 
Assume that $G={\rm Gal}(L/k)\simeq M_{11}$ and $H={\rm Gal}(L/K)\lneq G$ 
where $M_{11}$ is the Mathieu group of degree $11$. 
Let $T=R^{(1)}_{K/k}(\bG_m)$ be the norm one torus of $K/k$ of dimension $n-1$ 
and $X$ be a smooth $k$-compactification of $T$. 
Then we have 
\begin{align*}
H^1(k,{\rm Pic}\, \overline{X})=
\begin{cases}
0 & {\rm if}\ \ {\rm Syl}_2(H)\not\simeq C_2,C_4,C_8,\\
\bZ/2\bZ & {\rm if}\ \ {\rm Syl}_2(H)\simeq C_2,C_4,C_8 
\end{cases}
\end{align*}
as in Table $1$ and Table $2$.  
In particular, {\rm (i)} if  $H^1(k,{\rm Pic}\, \overline{X})\neq 0$, 
then $X$ (resp. $T$) is not retract $k$-rational; 
{\rm (ii)} if $k$ is a number field and 
$L/k$ is an unramified extension, then $A(T)=0$ and 
$H^1(k,{\rm Pic}\,\overline{X})\simeq \Sha(T)$. 
\end{theorem}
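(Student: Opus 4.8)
The plan is to reduce the computation of $H^1(k,{\rm Pic}\,\overline{X})$ to a purely group-theoretic invariant of the pair $(G,H)=(M_{11},H)$, and then to evaluate that invariant over all conjugacy classes of subgroups $H\lneq M_{11}$. By the isomorphisms quoted above we have $H^1(k,{\rm Pic}\,\overline{X})\simeq H^1(G,{\rm Pic}\,X_K)\simeq\Sha^2_\omega(G,\widehat{T})$, and since $\Sha^2_\omega(G,\widehat{T})$ does not depend on the base field, it suffices to determine this group for the character lattice $\widehat{T}$ of the norm one torus. First I would record the defining exact sequence of $G$-lattices
\begin{align*}
0\to\bZ\to\bZ[G/H]\to\widehat{T}\to 0,
\end{align*}
in which $\bZ\hookrightarrow\bZ[G/H]$ is the norm (diagonal) map. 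Passing to the long exact cohomology sequence and invoking Shapiro's lemma $H^i(G,\bZ[G/H])\simeq H^i(H,\bZ)$, together with the facts that $M_{11}$ is simple (so $H^2(G,\bZ)={\rm Hom}(G^{\rm ab},\bQ/\bZ)=0$) and has trivial Schur multiplier (so $H^3(G,\bZ)\simeq M(M_{11})=0$), yields a canonical isomorphism $H^2(G,\widehat{T})\simeq H^2(H,\bZ)\simeq H^{\rm ab}$.

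Next I would translate membership in $\Sha^2_\omega(G,\widehat{T})\subseteq H^2(G,\widehat{T})$ into a condition on $H^{\rm ab}$. For each cyclic $C=\langle g\rangle\leq G$ the same exact sequence restricted to $C$ (where $H^3(C,\bZ)=0$) identifies $H^2(C,\widehat{T})$ as the cokernel of $H^2(C,\bZ)\to H^2(C,\bZ[G/H])$, while the Mackey double-coset decomposition $\bZ[G/H]|_C\simeq\bigoplus_{CgH}\bZ[C/(C\cap gHg^{-1})]$ expresses ${\rm res}^G_C$ through the family of subgroups $C\cap gHg^{-1}$. Matching this against the Shapiro identification shows that a class $\chi\in H^{\rm ab}\simeq{\rm Hom}(H,\bQ/\bZ)$ lies in $\Sha^2_\omega$ precisely when all of its restrictions to these cyclic intersections vanish in the corresponding cokernels; equivalently one obtains the Drakokhrust--Platonov type description of the obstruction as a subquotient of $H^{\rm ab}$ cut out by the cyclic decomposition data. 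Since this criterion constrains $\chi$ only through its values on cyclic subgroups, the surviving part is governed by the Sylow structure of $H$, and the decisive case is the $2$-primary one.

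I would then carry out the evaluation. Using the list of conjugacy classes of subgroups $H\lneq M_{11}$ (a finite list obtained from the subgroup lattice of $M_{11}$ in GAP), for each $H$ I would compute $H^{\rm ab}$, the relevant cyclic intersections $C\cap gHg^{-1}$, and the resulting kernel. The expected outcome is that every prime-to-$2$ contribution and every contribution coming from a non-cyclic $2$-Sylow is annihilated, whereas a cyclic $2$-Sylow ${\rm Syl}_2(H)\simeq C_2,C_4,C_8$ with $|H|$ even leaves exactly a $\bZ/2\bZ$; these results are then tabulated in Table $1$ and Table $2$. I expect the main obstacle to be precisely this last step: tracking, uniformly across all subgroups $H$ and all cyclic $C$, which characters of $H^{\rm ab}$ are killed by the Mackey restrictions, and verifying that the answer collapses exactly to the stated dichotomy in ${\rm Syl}_2(H)$.

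Finally, the concluding assertion is immediate from the last clause of Theorem \ref{thV}: the splitting field of $T=R^{(1)}_{K/k}(\bG_m)$ is $L$, so when $k$ is a number field and $L/k$ is unramified we obtain $A(T)=0$ and $H^1(k,{\rm Pic}\,\overline{X})\simeq\Sha(T)$.
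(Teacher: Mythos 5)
Your proposal is correct, and it reaches the decisive group-theoretic computation by a genuinely different route than the paper. The two arguments share the endpoints: the reduction $H^1(k,{\rm Pic}\,\overline{X})\simeq\Sha^2_\omega(G,\widehat{T})$, which makes the answer a field-independent invariant of the pair $(G,H)$, a machine verification over the $38$ conjugacy classes of proper subgroups of $M_{11}$, and the same appeal to Theorem \ref{thV} for the final clause. The difference is the middle step. You compute $\Sha^2_\omega(G,\widehat{T})$ purely algebraically: from $0\to\bZ\to\bZ[G/H]\to\widehat{T}\to 0$, Shapiro's lemma, $G^{\rm ab}=1$ and $M(M_{11})\simeq H^3(G,\bZ)=0$ you get $H^2(G,\widehat{T})\simeq H^2(H,\bZ)\simeq{\rm Hom}(H,\bQ/\bZ)$ (the Pontryagin dual of $H^{\rm ab}$, not canonically $H^{\rm ab}$ itself, which is harmless here), and the Mackey double-coset decomposition converts membership in $\Sha^2_\omega$ into a vanishing condition on characters of $H$ along the intersections $C\cap gHg^{-1}$ for cyclic $C\leq G$. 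The paper instead argues arithmetically: taking $k$ a number field with $L/k$ unramified, Theorems \ref{thV} and \ref{thOno} give $H^1(G,[J_{G/H}]^{fl})\simeq\Sha(T)\simeq{\rm Obs}(K/k)$, Theorem \ref{thDra89} (again using $M(G)=0$) gives ${\rm Obs}(K/k)={\rm Obs}_1(L/K/k)$, and Proposition \ref{propDP} (iii) turns this into the purely group-theoretic quotient ${\rm Ker}\,\psi_1/(\Phi^G(H)/[H,H])$, which is exactly what the GAP functions {\tt FirstObstructionN} and {\tt FirstObstructionDnr} compute. Your cyclic-intersection criterion is precisely the Pontryagin dual of that quotient (and is essentially the Macedo--Newton formalism that the paper cites as an alternative in the proof of Theorem \ref{thmain2}), so the two computations agree. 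What your route buys: it is self-contained and manifestly field-independent, avoiding the arithmetic machinery and, in particular, the implicit existence of an unramified $M_{11}$-extension of some number field realizing each $H$, which the paper's argument needs in order not to be vacuous. What the paper's route buys: it reuses ready-made theorems and already-implemented GAP functions, and the objects it manipulates (${\rm Ker}\,\psi_1$, $\varphi_1({\rm Ker}\,\psi_2^{v})$) feed directly into the ramified-place analysis of Theorem \ref{thmain2}. Like the paper, you defer the verification of the ${\rm Syl}_2(H)$ dichotomy to explicit computation over the subgroup lattice; that is unavoidable, and your criterion is implementable in the same way.
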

\begin{theorem}\label{thmain1-2}
Let $k$ be a field, 
$K/k$ be a separable field extension of degree $n$ 
and $L/k$ be the Galois closure of $K/k$. 
Assume that $G={\rm Gal}(L/k)\simeq J_1$ and $H={\rm Gal}(L/K)\lneq G$ 
where $J_1$ is the Janko group. 
Let $T=R^{(1)}_{K/k}(\bG_m)$ be the norm one torus of $K/k$ of dimension $n-1$ 
and $X$ be a smooth $k$-compactification of $T$. 
Then we have 
\begin{align*}
H^1(k,{\rm Pic}\, \overline{X})=
\begin{cases}
0 & {\rm if}\ \ {\rm Syl}_2(H)\not\simeq C_2,\\
\bZ/2\bZ & {\rm if}\ \ {\rm Syl}_2(H)\simeq C_2
\end{cases}
\end{align*}
as in Table $3$ and Table $4$.  
In particular, {\rm (i)} if  $H^1(k,{\rm Pic}\, \overline{X})\neq 0$, 
then $X$ (resp. $T$) is not retract $k$-rational; 
{\rm (ii)} if $k$ is a number field and 
$L/k$ is an unramified extension, then $A(T)=0$ and 
$H^1(k,{\rm Pic}\,\overline{X})\simeq \Sha(T)$.  
\end{theorem}
In Tables $1$--$4$, 
$C_n$ (resp. $D_n$, $QD_n$, $A_n$, $S_n$, $V_4$, $Q_8$) denotes 
the cyclic (resp. dihedral, quasi-dihedral, alternating, symmetric, Klein four, quaternion) 
group of order 
$n$ (resp. $2n$, $2n$, $n!/2$, $n!$, $4$, $8$)  
and 
$\SL_2(\bF_3)$ (resp. $\GL_2(\bF_3)$, $\PSL_2(\bF_{11})$) denotes 
the special (resp. general, projective special) linear group of degree $2$ over 
$\bF_3$ (resp. $\bF_3$, $\bF_{11}$) 
of order $24$ (resp. $48$, $660$). 
The subgroups $H^{(1)}$ and $H^{(2)}$ of $G$ are isomorphic to $H$ but not conjugate in $G$. 
See Section 4 
for more detailed information and GAP computations. 

For the flabby class $[J_{G/H}]^{fl}$ of the Chevalley module 
$J_{G/H}=(I_{G/H})^\circ={\rm Hom}_\bZ(I_{G/H},\bZ)
\simeq \widehat{T}={\rm Hom}(T,\bG_m)$ where $I_{G/H}={\rm Ker}\,\varepsilon$ 
and $\varepsilon:\bZ[G/H]\to \bZ$ is the argumentation map, 
as in Table $1$ and Table $2$, 
see the related previous papers \cite{HY17}, \cite{HHY20}, \cite{HY21}, \cite{HKY22}, \cite{HKY23}.
For $G\simeq M_{11}$, 
it turns out that there exist $38=25+13$ subgroups $H\lneq G$ up to conjugacy 
and $25$ (resp. $13$) of them satisfiy 
$H^1(k,{\rm Pic}\, \overline{X})\simeq H^1(G,[J_{G/H}]^{fl})=0$ (resp. $\bZ/2\bZ$) 
(see Table $1$ and Table $2$). 
For $G\simeq J_1$, there exist $39=23+16$ subgroups $H\lneq G$ up to conjugacy 
and $23$ (resp. $16$) of them satisfy 
$H^1(k,{\rm Pic}\, \overline{X})\simeq H^1(G,[J_{G/H}]^{fl})=0$ (resp. $\bZ/2\bZ$) 
(see Table $3$ and Table $4$). 

Using Theorem \ref{thmain1} and Theorem \ref{thmain1-2}, 
we give a necessary and sufficient condition for the Hasse norm principle 
for $K/k$ (i.e. $\Sha(T)=0$, see Theorem \ref{thOno}) 
when the Galois closure $L/k$ of $K/k$ satisfies ${\rm Gal}(L/k)\simeq M_{11}$ or $J_1$. 
The following two results for $M_{11}$ and $J_1$ 
give a first step towards understanding the all pictures of 
the Hasse norm principle for the $26$ sporadic simple groups. 
\begin{theorem}\label{thmain2}
Let $k$ be a number field, 
$K/k$ be a field extension of degree $n$ 
and $L/k$ be the Galois closure of $K/k$. 
Assume that $G={\rm Gal}(L/k)\simeq M_{11}$ and $H={\rm Gal}(L/K)\lneq G$ 
where $M_{11}$ is the Mathieu group of degree $11$. 
Let $T=R^{(1)}_{K/k}(\bG_m)$ be the norm one torus of $K/k$ 
of dimension $n-1$ and $X$ be a smooth $k$-compactification of $T$. 
Let $G_v$ be the decomposition group of $G$ at a place $v$ of $k$.\\
{\rm (1)} If ${\rm Syl}_2(H)\not\simeq C_2,C_4,C_8$, then  
$A(T)\simeq\Sha(T)\simeq H^1(k,{\rm Pic}\,\overline{X})=0$ $($see Table $1$$)$.\\
{\rm (2)} If ${\rm Syl}_2(H)\simeq C_2,C_4,C_8$, then  
either 
{\rm (a)} $A(T)=0$ and $\Sha(T)\simeq H^1(k,{\rm Pic}\,\overline{X})\simeq\bZ/2\bZ$ 
or 
{\rm (b)} 
$A(T)\simeq H^1(k,{\rm Pic}\,\overline{X})\simeq\bZ/2\bZ$ and $\Sha(T)=0$ $($see Table $2$$)$,  
and the following conditions are equivalent:\\
 {\rm (b)} 
$A(T)\simeq H^1(k,{\rm Pic}\,\overline{X})\simeq\bZ/2\bZ$ and $\Sha(T)=0$;\\
{\rm (c)} there exists a place $v$ of $k$ such that 
\begin{align*}
\begin{cases}
V_4\leq G_v\ {\rm or}\ Q_8\leq G_v & {\rm if}\ \ {\rm Syl}_2(H)\simeq C_2,\\
D_4\leq G_v\ {\rm or}\ Q_8\leq G_v  & {\rm if}\ \ {\rm Syl}_2(H)\simeq C_4,\\
QD_8\leq G_v & {\rm if}\ \ {\rm Syl}_2(H)\simeq C_8.
\end{cases}
\end{align*}
\end{theorem}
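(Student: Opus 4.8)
The plan is to derive the whole statement from Theorem~\ref{thmain1}, Voskresenskii's exact sequence (Theorem~\ref{thV}) and the arithmetic duality of tori over number fields. For part (1) I would simply invoke Theorem~\ref{thmain1}, which gives $H^1(k,{\rm Pic}\,\overline X)=0$ when ${\rm Syl}_2(H)\not\simeq C_2,C_4,C_8$; then the sequence $0\to A(T)\to H^1(k,{\rm Pic}\,\overline X)^\vee\to\Sha(T)\to 0$ forces $A(T)=\Sha(T)=0$.

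For part (2), Theorem~\ref{thmain1} gives $H^1(k,{\rm Pic}\,\overline X)^\vee\simeq\bZ/2\bZ$, so Voskresenskii's sequence becomes $0\to A(T)\to\bZ/2\bZ\to\Sha(T)\to 0$. Since the middle group has order $2$, either $A(T)=0$ and $\Sha(T)\simeq\bZ/2\bZ$, or $A(T)\simeq\bZ/2\bZ$ and $\Sha(T)=0$; this is the dichotomy (a)/(b). The essential point is the equivalence of (b) with the local condition (c), which I would approach as follows.

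Write $M=\widehat T=J_{G/H}$ and combine two identifications: on one hand $H^1(k,{\rm Pic}\,\overline X)\simeq\Sha^2_\omega(G,M)$ (the theorem of Colliot-Th\'el\`ene--Sansuc quoted above), and on the other the Poitou--Tate duality $\Sha(T)=\Sha^1(k,T)\simeq\Sha^2(k,M)^\vee$, where $\Sha^2(k,M)={\rm Ker}[H^2(G,M)\to\prod_v H^2(G_v,M)]$ is cut out by restriction to the genuine decomposition groups $G_v$. Because every cyclic subgroup of $G$ occurs as some $G_v$ by Chebotarev's density theorem, one has $\Sha^2(k,M)\subseteq\Sha^2_\omega(G,M)$ and, more precisely, $\Sha^2(k,M)=\{c\in\Sha^2_\omega(G,M):{\rm res}_{G_v}(c)=0\ \text{for every}\ v\}$, the only nontrivial conditions coming from the finitely many $v$ with $G_v$ noncyclic; dually $A(T)\simeq(\Sha^2_\omega(G,M)/\Sha^2(k,M))^\vee$. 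Letting $c$ denote the nonzero class of $\Sha^2_\omega(G,M)\simeq\bZ/2\bZ$, this shows that (b), i.e. $\Sha(T)=0$, holds if and only if ${\rm res}_{G_v}(c)\neq 0$ for some place $v$. As $c$ is $2$-torsion, restriction to a Sylow $2$-subgroup is injective on the $2$-primary part, so ${\rm res}_{G_v}(c)\neq 0$ if and only if ${\rm res}_D(c)\neq 0$ for $D={\rm Syl}_2(G_v)$; and since the family of $2$-subgroups detecting $c$ is closed under passage to larger subgroups, this happens exactly when $G_v$ contains one of the minimal detecting $2$-subgroups.

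It then remains to identify, for each admissible $H$, the minimal $2$-subgroups $D\leq G$ with ${\rm res}_D(c)\neq 0$. Using the short exact sequence $0\to\bZ\to\bZ[G/H]\to J_{G/H}\to 0$ together with Shapiro's lemma and the Mackey formula, the groups $H^i(D,J_{G/H})$ and the restriction of $c$ are expressed through the cohomology of the intersections $D\cap{}^gH$ over the double cosets $g\in D\backslash G/H$ (note $H^2(D,\bZ[G/H])\simeq\bigoplus_g H^2(D\cap{}^gH,\bZ)\simeq\bigoplus_g(D\cap{}^gH)^{\rm ab,\vee}$), which is precisely the mechanism by which the outcome depends on the $2$-Sylow type of $H$. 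Carrying out this computation in GAP --- the Sylow $2$-subgroup of $M_{11}$ being $QD_8$ of order $16$, with the three maximal subgroups $C_8,D_4,Q_8$ --- I expect to find that the minimal detecting subgroups are $\{V_4,Q_8\}$ when ${\rm Syl}_2(H)\simeq C_2$, $\{D_4,Q_8\}$ when ${\rm Syl}_2(H)\simeq C_4$, and $\{QD_8\}$ when ${\rm Syl}_2(H)\simeq C_8$; then $G_v$ detects $c$ precisely when it contains one of the listed subgroups, which is exactly condition (c). I expect this last step to be the main obstacle: everything preceding it is formal, whereas identifying the minimal detecting subgroups requires the explicit cohomological computation of the restriction maps over all relevant $2$-subgroups of $M_{11}$ and the verification, case by case over the conjugacy classes of $H$ recorded in Tables~1 and~2, that they are exactly those listed.
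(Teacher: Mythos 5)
Your proposal is correct, and for the heart of the theorem --- the equivalence (b)$\Leftrightarrow$(c) --- it takes a genuinely different route from the paper; part (1) and the dichotomy (a)/(b) in part (2) are obtained exactly as the paper does, from Theorem~\ref{thmain1} and Voskresenskii's exact sequence. The paper proves (b)$\Leftrightarrow$(c) with the Drakokhrust--Platonov machinery: since $M(M_{11})=0$, Theorem~\ref{thDra89} gives ${\rm Obs}(K/k)={\rm Obs}_1(L/K/k)={\rm Ker}\,\psi_1/\varphi_1({\rm Ker}\,\psi_2)$, a quantity built from abelianizations of the subgroups $H_w\leq H$ and $G_v\leq G$, and the GAP functions {\tt FirstObstructionDr} and {\tt MinConjugacyClassesSubgroups} then determine, for each conjugacy class of candidate decomposition groups and each $H$ in Table~2, the minimal subgroups forcing the obstruction to vanish (upward closure being Proposition~\ref{propDP}~(ii)). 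You instead work cohomologically: Poitou--Tate duality $\Sha(T)\simeq\Sha^2(k,\widehat{T})^\vee$, the identification $H^1(k,{\rm Pic}\,\overline{X})\simeq\Sha^2_\omega(G,\widehat{T})$, and the standard inflation argument identifying $\Sha^2(k,\widehat{T})$ with the subgroup of $\Sha^2_\omega(G,\widehat{T})$ killed by restriction to all decomposition groups $G_v\leq G$; then (b) holds iff the nonzero class $c$ survives restriction to some $G_v$, which by your Sylow-injectivity and monotonicity observations happens iff some $G_v$ contains a minimal detecting $2$-subgroup, to be found by a Mackey--Shapiro computation. This is in essence the Macedo--Newton formalism, dual to the paper's homological one (the paper invokes \cite{MN22} only for the reduction to $2$-group $H$). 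What each buys: your route needs no vanishing of the Schur multiplier (no Theorem~\ref{thDra89}, no Schur covers), so it works verbatim for any $G$ and exhibits the obstruction as a concrete cohomology class; the paper's route computes only with abelianizations of subgroups and has ready-made GAP code implementing it. The one caveat is that both arguments bottom out in a finite computation which you describe but do not execute: your predicted minimal detecting subgroups ($\{V_4,Q_8\}$, $\{D_4,Q_8\}$, $\{QD_8\}$ according as ${\rm Syl}_2(H)\simeq C_2,C_4,C_8$) agree with the paper's GAP output ({\tt C2 x C2}/{\tt Q8}, {\tt D8}/{\tt Q8}, {\tt QD16}), but the case-by-case run over the thirteen classes of $H$ in Table~2 (or a Sylow-type reduction to $H\simeq C_2,C_4,C_8$) must still actually be performed to close the proof.
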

\begin{theorem}\label{thmain2-2}
Let $k$ be a number field, 
$K/k$ be a field extension of degree $n$ 
and $L/k$ be the Galois closure of $K/k$. 
Assume that $G={\rm Gal}(L/k)\simeq J_1$ and $H={\rm Gal}(L/K)\lneq G$ 
where $J_1$ is the Janko group. 
Let $T=R^{(1)}_{K/k}(\bG_m)$ be the norm one torus of $K/k$ 
of dimension $n-1$ and $X$ be a smooth $k$-compactification of $T$. 
Let $G_v$ be the decomposition group of $G$ at a place $v$ of $k$.\\
{\rm (1)} If ${\rm Syl}_2(H)\not\simeq C_2$, then  
$A(T)\simeq\Sha(T)\simeq H^1(k,{\rm Pic}\,\overline{X})=0$ $($see Table $3$$)$.\\
{\rm (2)} If ${\rm Syl}_2(H)\simeq C_2$, then  
either 
{\rm (a)} $A(T)=0$ and $\Sha(T)\simeq H^1(k,{\rm Pic}\,\overline{X})\simeq\bZ/2\bZ$ 
or 
{\rm (b)} 
$A(T)\simeq H^1(k,{\rm Pic}\,\overline{X})\simeq\bZ/2\bZ$ and $\Sha(T)=0$ $($see Table $4$$)$,  
and the following conditions are equivalent:\\
 {\rm (b)} 
$A(T)\simeq H^1(k,{\rm Pic}\,\overline{X})\simeq\bZ/2\bZ$ and $\Sha(T)=0$;\\
{\rm (c)} there exists a place $v$ of $k$ such that $V_4\leq G_v$.
\end{theorem}
Note that 
a place $v$ of $k$ with non-cyclic decomposition group $G_v$ 
as in Theorem \ref{thmain2} (c) (resp. Theorem \ref{thmain2-2} (c)) 
must be ramified in $L$ because 
if $v$ is unramified, then $G_v$ is cyclic. 
%
%
\newpage
\begin{center}
\vspace*{1mm}
Table $1$:  $H\lneq G\simeq M_{11}$ with $[G:H]=n$ and $H^1(k,{\rm Pic}\, \overline{X})\simeq H^1(G,[J_{G/H}]^{fl})=0$
\vspace*{2mm}\\
\begin{tabular}{llccc} 
$H$ & ${\rm Syl}_2(H)$ & $|H|$ & $n=[K:k]$ & $H^1(k,{\rm Pic}\, \overline{X})$ $\simeq H^1(G,[J_{G/H}]^{fl})$\\\hline
$\{1\}$ & $\{1\}$ & 1 & 7920 & 0\\
$C_3$ & $\{1\}$ & 3 & 2640 & 0\\
$V_4$ & $V_4$ & 4 & 1980 & 0\\
$C_5$ & $\{1\}$ & 5 & 1584 & 0\\
$Q_8$ & $Q_8$ & 8 & 990 & 0\\
$D_4$ & $D_4$ & 8 & 990 & 0\\
$C_3\times C_3$ & $\{1\}$ & 9 & 880 & 0\\
$C_{11}$ & $\{1\}$ & 11 & 720 & 0\\
$A_4$ & $V_4$ & 12 & 660 & 0\\
$D_6$ & $V_4$ & 12 & 660 & 0\\
$QD_8$ & $QD_8$ & 16 & 495 & 0\\
$\SL_2(\bF_3)$ & $Q_8$ & 24 & 330 & 0\\
$S_4$ & $D_4$ & 24 & 330 & 0\\
$S_3\times S_3$ & $V_4$ & 36 & 220 & 0\\
$\GL_2(\bF_3)$ & $QD_8$ & 48 & 165 & 0\\
$C_{11}\rtimes C_5$ & $\{1\}$ & 55 & 144 & 0\\
$A_5^{(1)}$ & $V_4$ & 60 & 132 & 0\\
$A_5^{(2)}$ & $V_4$ & 60 & 132 & 0\\
$(C_3\times C_3)\rtimes Q_8$ & $Q_8$ & 72 & 110 & 0\\
$(S_3\times S_3)\rtimes C_2$ & $D_4$ & 72 & 110 & 0\\
$S_5$ & $D_4$ & 120 & 66 & 0\\
$(C_3\times C_3)\rtimes QD_8$ & $QD_8$ & 144 & 55 & 0\\
$A_6$ & $D_4$ & 360 & 22 & 0\\
$\PSL_2(\bF_{11})$ & $V_4$ & 660 & 12 & 0\\
$M_{10}=A_6 . C_2$ & $QD_8$ & 720 & 11 & 0
\end{tabular}
\end{center}
%
\begin{center}
\vspace*{5mm}
Table $2$:  $H\lneq G\simeq M_{11}$ with $[G:H]=n$ and $H^1(k,{\rm Pic}\, \overline{X})\simeq H^1(G,[J_{G/H}]^{fl})\simeq \bZ/2\bZ$
\vspace*{2mm}\\
\begin{tabular}{llccc} 
$H$ & ${\rm Syl}_2(H)$ & $|H|$ & $n=[K:k]$ & $H^1(k,{\rm Pic}\, \overline{X})$ $\simeq H^1(G,[J_{G/H}]^{fl})$\\\hline
$C_2$ & $C_2$ & 2 & 3960 & $\bZ/2\bZ$\\
$C_4$ & $C_4$ & 4 & 1980 & $\bZ/2\bZ$\\
$S_3^{(1)}$ & $C_2$ & 6 & 1320 & $\bZ/2\bZ$\\
$S_3^{(2)}$ & $C_2$ & 6 & 1320 & $\bZ/2\bZ$\\
$C_6$ & $C_2$ & 6 & 1320 & $\bZ/2\bZ$\\
$C_8$ & $C_8$ & 8 & 990 & $\bZ/2\bZ$\\
$D_5$ & $C_2$ & 10 & 792 & $\bZ/2\bZ$\\
$(C_3\times C_3)\rtimes C_2$ & $C_2$ & 18 & 440 & $\bZ/2\bZ$\\
$S_3\times C_3$ & $C_2$ & 18 & 440 & $\bZ/2\bZ$\\
$C_5\rtimes C_4$ & $C_4$ & 20 & 396 & $\bZ/2\bZ$\\
$((C_3\times C_3)\rtimes C_4)^{(1)}$ & $C_4$ & 36 & 220 & $\bZ/2\bZ$\\
$((C_3\times C_3)\rtimes C_4)^{(2)}$ & $C_4$ & 36 & 220 & $\bZ/2\bZ$\\
$(C_3\times C_3)\rtimes C_8$ & $C_8$ & 72 & 110 & $\bZ/2\bZ$
\end{tabular}
\end{center}~\vspace*{0mm}\\

%
%
\newpage
\begin{center}
\vspace*{1mm}
Table $3$:  $H\lneq G\simeq J_1$ with $[G:H]=n$ and $H^1(k,{\rm Pic}\, \overline{X})\simeq H^1(G,[J_{G/H}]^{fl})=0$
\vspace*{2mm}\\
\begin{tabular}{llccc} 
$H$ & ${\rm Syl}_2(H)$ & $|H|$ & $n=[K:k]$ & $H^1(k,{\rm Pic}\, \overline{X})$ $\simeq H^1(G,[J_{G/H}]^{fl})$\\\hline
$\{1\}$ & $\{1\}$ & 1 & 175560 & 0\\
$C_3$ & $\{1\}$ & 3 & 58520 & 0\\
$V_4$ & $V_4$ & 4 & 43890 & 0\\
$C_5$ & $\{1\}$ & 5 & 35112 & 0\\
$C_7$ & $\{1\}$ & 7 & 25080 & 0\\
$C_2\times C_2\times C_2$ & $C_2\times C_2\times C_2$ & 8 & 21945 & 0\\
$C_{11}$ & $\{1\}$ & 11 & 15960 & 0\\
$A_4$ & $V_4$ & 12 & 14630 & 0\\
$D_6$ & $V_4$ & 12 & 14630 & 0\\
$C_{15}$ & $\{1\}$ & 15 & 11704 & 0\\
$C_{19}$ & $\{1\}$ & 19 & 9240 & 0\\
$D_{10}$ & $V_4$ & 20 & 8778 & 0\\
$C_7\rtimes C_3$ & $\{1\}$ & 21 & 8360 & 0\\
$A_4\times C_2$ & $C_2\times C_2\times C_2$ & 24 & 7315 & 0\\
$C_{11}\rtimes C_5$ & $\{1\}$ & 55 & 3192 & 0\\
$(C_2\times C_2\times C_2)\rtimes C_7$ & $C_2\times C_2\times C_2$ & 56 & 3135 & 0\\
$C_{19}\rtimes C_3$ & $\{1\}$ & 57 & 3080 & 0\\
$A_5^{(1)}$ & $V_4$ & 60 & 2926 & 0\\
$A_5^{(2)}$ & $V_4$ & 60 & 2926 & 0\\
$S_3\times D_5$ & $V_4$ & 60 & 2926 & 0\\
$A_5\times C_2$ & $C_2\times C_2\times C_2$ & 120 & 1463 & 0\\
$(C_2\times C_2\times C_2)\rtimes (C_7\rtimes C_3)$ & 
$C_2\times C_2\times C_2$ & 168 & 1045 & 0\\
$\PSL_2(\bF_{11})$ & $V_4$ & 660 & 266 & 0
\end{tabular}
\end{center}
%
\begin{center}
\vspace*{5mm}
Table $4$:  $H\lneq G\simeq J_1$ with $[G:H]=n$ and $H^1(k,{\rm Pic}\, \overline{X})\simeq H^1(G,[J_{G/H}]^{fl})\simeq \bZ/2\bZ$
\vspace*{2mm}\\
\begin{tabular}{llccc} 
$H$ & ${\rm Syl}_2(H)$ & $|H|$ & $n=[K:k]$ & $H^1(k,{\rm Pic}\, \overline{X})$ $\simeq H^1(G,[J_{G/H}]^{fl})$\\\hline
$C_2$ & $C_2$ & 2 & 87780 & $\bZ/2\bZ$\\
$S_3^{(1)}$ & $C_2$ & 6 & 29260 & $\bZ/2\bZ$\\
$S_3^{(2)}$ & $C_2$ & 6 & 29260 & $\bZ/2\bZ$\\
$C_6$ & $C_2$ & 6 & 29260 & $\bZ/2\bZ$\\
$D_5^{(1)}$ & $C_2$ & 10 & 17556 & $\bZ/2\bZ$\\
$D_5^{(2)}$ & $C_2$ & 10 & 17556 & $\bZ/2\bZ$\\
$C_{10}$ & $C_2$ & 10 & 17556 & $\bZ/2\bZ$\\
$D_7$ & $C_2$ & 14 & 12540 & $\bZ/2\bZ$\\
$D_{11}$ & $C_2$ & 22 & 7980 & $\bZ/2\bZ$\\
$D_5\times C_3$ & $C_2$ & 30 & 5852 & $\bZ/2\bZ$\\
$D_{15}$ & $C_2$ & 30 & 5852 & $\bZ/2\bZ$\\
$S_3\times C_5$ & $C_2$ & 30 & 5852 & $\bZ/2\bZ$\\
$D_{19}$ & $C_2$ & 38 & 4620 & $\bZ/2\bZ$\\
$C_7\rtimes C_6$ & $C_2$ & 42 & 4180 & $\bZ/2\bZ$\\
$C_{11}\rtimes C_{10}$ & $C_2$ & 110 & 1596 & $\bZ/2\bZ$\\
$C_{19}\rtimes C_6$ & $C_2$ & 114 & 1540 & $\bZ/2\bZ$
\end{tabular}
\end{center}~\vspace*{0mm}\\
\newpage

By Ono's formula $\tau(T)=|H^1(k,\widehat{T})|/|\Sha(T)|$  
(see Ono \cite[Main theorem, page 68]{Ono63}), 
we get the Tamagawa number $\tau(T)$ of algebraic $k$-tori $T$ over a global field $k$ 
(see Voskresenskii \cite[Theorem 2, page 146]{Vos98}, 
Hoshi, Kanai and Yamasaki \cite[Section 8, Application 2]{HKY22}): 
\begin{corollary}
Let the notation be as in Theorem \ref{thmain2} 
(resp. Theorem \ref{thmain2-2}). 
Then the Tamagawa number 
$\tau(T)=1/|\Sha(T)|=1$ or $1/2$ where $\Sha(T)$ is given as in 
Theorem \ref{thmain2} (resp. Theorem \ref{thmain2-2}). 
\end{corollary}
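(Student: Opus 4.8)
The plan is to combine Ono's formula $\tau(T)=|H^1(k,\widehat{T})|/|\Sha(T)|$ with a direct computation of the character-lattice cohomology and with the determination of $\Sha(T)$ already obtained in Theorem \ref{thmain2}. Since Theorem \ref{thmain2} shows $\Sha(T)=0$ or $\bZ/2\bZ$, we have $|\Sha(T)|\in\{1,2\}$, so everything reduces to proving that $H^1(k,\widehat{T})=0$; granting this, Ono's formula becomes $\tau(T)=1/|\Sha(T)|$, which equals $1$ or $1/2$ according to whether $\Sha(T)=0$ or $\Sha(T)\simeq\bZ/2\bZ$.

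First I would reduce the Galois cohomology group $H^1(k,\widehat{T})$ to finite group cohomology. Because the $\mathcal{G}$-action on the $\bZ$-free module $\widehat{T}$ factors through $G={\rm Gal}(L/k)\simeq M_{11}$ and ${\rm Gal}(\overline{k}/L)$ acts trivially, the inflation-restriction sequence gives $H^1(k,\widehat{T})\simeq H^1(G,\widehat{T})$; here the restriction term $H^1({\rm Gal}(\overline{k}/L),\widehat{T})={\rm Hom}_{\rm cont}({\rm Gal}(\overline{k}/L),\widehat{T})$ vanishes, since a continuous homomorphism from a profinite group to a torsion-free discrete group is trivial.

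Next I would compute $H^1(G,\widehat{T})$. The defining exact sequence $1\to T\to R_{K/k}(\bG_m)\xrightarrow{N}\bG_m\to 1$ of the norm one torus induces on character lattices the short exact sequence $0\to\bZ\to\bZ[G/H]\to\widehat{T}\to 0$, which identifies $\widehat{T}\simeq J_{G/H}$. The associated long exact cohomology sequence contains the segment $H^1(G,\bZ[G/H])\to H^1(G,\widehat{T})\to H^2(G,\bZ)$. By Shapiro's lemma $H^1(G,\bZ[G/H])\simeq H^1(H,\bZ)={\rm Hom}(H,\bZ)=0$ as $H$ is finite, while $H^2(G,\bZ)\simeq{\rm Hom}(G,\bQ/\bZ)=0$ since $G\simeq M_{11}$ is a nonabelian simple group, hence perfect. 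Therefore $H^1(G,\widehat{T})=0$ for every subgroup $H\lneq G$, uniformly across the case distinctions of Theorem \ref{thmain2}.

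This argument is elementary and presents no serious obstacle; the only points requiring care are the correct identification $\widehat{T}\simeq J_{G/H}$ of the character lattice and the use of the perfectness of $M_{11}$ to annihilate $H^2(G,\bZ)$. Feeding $|H^1(k,\widehat{T})|=1$ into Ono's formula and invoking the values of $\Sha(T)$ from Theorem \ref{thmain2} then yields $\tau(T)=1$ in cases (1) and (2)(b), where $\Sha(T)=0$, and $\tau(T)=1/2$ in case (2)(a), where $\Sha(T)\simeq\bZ/2\bZ$, as claimed.
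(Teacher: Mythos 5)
Your proposal is correct and follows essentially the same route as the paper: both use the exact sequence $0\to\bZ\to\bZ[G/H]\to J_{G/H}\simeq\widehat{T}\to 0$, kill $H^1(G,\bZ[G/H])$ by Shapiro's lemma and $H^2(G,\bZ)\simeq{\rm Hom}(G,\bQ/\bZ)$ by the simplicity (perfectness) of $M_{11}$, and then conclude via Ono's formula $\tau(T)=|H^1(k,\widehat{T})|/|\Sha(T)|$. The only difference is that you spell out the inflation--restriction reduction $H^1(k,\widehat{T})\simeq H^1(G,\widehat{T})$, which the paper leaves implicit.
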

\begin{proof}
By the definition, we have $0\to \bZ\xrightarrow{\varepsilon^\circ}\bZ[G/H]\to J_{G/H}\to 0$ where 
$J_{G/H}\simeq \widehat{T}={\rm Hom}(T,\bG_m)$. 
Then we get $H^1(G,\bZ[G/H])\to H^1(G,J_{G/H})$ 
$\xrightarrow{\delta}$ $H^2(G,\bZ)$ 
where $\delta$ is the connecting homomorphism. 
We have $H^2(G,\bZ)\simeq H^1(G,\bQ/\bZ)
={\rm Hom}(G,\bQ/\bZ)\simeq G^{ab}=G/[G,G]=1$ 
because $G\simeq M_{11}$ (resp. $J_1$) 
is a simple group 
and $H^1(G,\bZ[G/H])\simeq H^1(H,\bZ)={\rm Hom}(H,\bZ)=0$ by Shapiro's lemma. 
This implies that $H^1(G,J_{G/H})=0$. 
Hence the assertion follows from Ono's formula $\tau(T)=|H^1(k,\widehat{T})|/|\Sha(T)|$ 
and $H^1(k,\widehat{T})\simeq H^1(G,J_{G/H})=0$. 
\end{proof}
\begin{remark}
We also get the group of $R$-equivalence classes $T(k)/R\simeq H^1(G,[J_{G/H}]^{fl})=0$ or 
$\bZ/2\bZ$ as in Table $1$ and Table $2$ (resp. Table $3$ and Table $4$) 
where $k$ is a local field 
(see Colliot-Th\'{e}l\`{e}ne and Sansuc \cite[Corollary 5, page 201]{CTS77}, 
Voskresenskii \cite[Section 17.2]{Vos98} and 
Hoshi, Kanai and Yamasaki \cite[Section 7]{HKY22}). 
\end{remark}

We organize this paper as follows. 
In Section \ref{S2}, we recall Drakokhrust and Platonov's method 
for the Hasse norm principle for $K/k$. 
In Section \ref{S3}, we give the proof of 
Theorem \ref{thmain1}, Theorem \ref{thmain1-2}, Theorem \ref{thmain2} 
and Theorem \ref{thmain2-2} 
using Drakokhrust and Platonov's method with 
the aid of GAP computations developed by 
Hoshi, Kanai and Yamasaki \cite{HKY22}, \cite{HKY23}. 
In Section \ref{S4} (resp. Section \ref{S5}), 
GAP computations which are used in the proof of 
Theorem \ref{thmain1} and Theorem \ref{thmain2} for $G\simeq M_{11}$
(resp. Theorem \ref{thmain1-2} and Theorem \ref{thmain2-2} for $G\simeq J_1$) 
are given.
Some related GAP algorithms are also available 
as in \cite{Norm1ToriHNP}.


%
\section{Drakokhrust and Platonov's method}\label{S2}

Let $k$ be a number field, $K/k$ be a finite extension, 
$\bA_K^\times$ be the idele group of $K$ and 
$L/k$ be the Galois closure of $K/k$. 
Let $G={\rm Gal}(L/k)$ 
and $H={\rm Gal}(L/K)$. 

For $x,y\in G$, we denote $[x,y]=x^{-1}y^{-1}xy$ the commutator of 
$x$ and $y$, and $[G,G]$ the commutator group of $G$. 
Let $V_k$ be the set of all places of $k$ 
and $G_v$ be the decomposition group of $G$ at $v\in V_k$. 

\begin{definition}[{Drakokhrust and Platonov \cite[page 350]{PD85a}, \cite[page 300]{DP87}}]
Let $k$ be a number field and 
$L\supset K\supset k$ be a tower of finite extensions 
where $L$ is normal over $k$. 
We call the group 
\begin{align*}
{\rm Obs}(K/k)=(N_{K/k}(\bA_K^\times)\cap k^\times)/N_{K/k}(K^\times)
\end{align*}
{\it the total obstruction to the Hasse norm principle for $K/k$} 
and 
\begin{align*}
{\rm Obs}_1(L/K/k)=\left(N_{K/k}(\bA_K^\times)\cap k^\times\right)/\left((N_{L/k}(\bA_L^\times)\cap k^\times)N_{K/k}(K^\times)\right)
\end{align*}
{\it the first obstruction to the Hasse norm principle for $K/k$ 
corresponding to the tower 
$L\supset K\supset k$}. 
\end{definition}

Note that (i) 
${\rm Obs}(K/k)=1$ if and only if 
the Hasse norm principle holds for $K/k$; 
and (ii) ${\rm Obs}_1(L/K/k)
={\rm Obs}(K/k)/(N_{L/k}(\bA_L^\times)\cap k^\times)$. 

Drakokhrust and Platonov gave a formula 
for computing the first obstruction ${\rm Obs}_1(L/K/k)$: 

\begin{theorem}[{Drakokhrust and Platonov \cite[page 350]{PD85a}, \cite[pages 789--790]{PD85b}, \cite[Theorem 1]{DP87}}]\label{thDP2}
Let $k$ be a number field, 
$L\supset K\supset k$ be a tower of finite extensions 
where 
$L$ is normal over $k$.  
Let $G={\rm Gal}(L/k)$ and $H={\rm Gal}(L/K)$. 
Then 
\begin{align*}
{\rm Obs}_1(L/K/k)\simeq 
{\rm Ker}\, \psi_1/\varphi_1({\rm Ker}\, \psi_2)
\end{align*}
where 
\begin{align*}
\begin{CD}
H/[H,H] @>\psi_1 >> G/[G,G]\\
@AA\varphi_1 A @AA\varphi_2 A\\
\displaystyle{\bigoplus_{v\in V_k}\left(\bigoplus_{w\mid v} H_w/[H_w,H_w]\right)} @>\psi_2 >> 
\displaystyle{\bigoplus_{v\in V_k} G_v/[G_v,G_v]}, 
\end{CD}
\end{align*}
$\psi_1$, $\varphi_1$ and $\varphi_2$ are defined 
by the inclusions $H\subset G$, $H_w\subset H$ and $G_v\subset G$ respectively, and 
\begin{align*}
\psi_2(h[H_{w},H_{w}])=x^{-1}hx[G_v,G_v]
\end{align*}
for $h\in H_{w}=H\cap x^{-1}hx[G_v,G_v]$ $(x\in G)$.
\end{theorem}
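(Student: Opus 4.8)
The plan is to translate the idelic definition of $\mathrm{Obs}_1(L/K/k)$ into the abelianized Galois picture by means of global and local class field theory, and then to read off the stated group by a diagram chase in the given commutative square. First I would rewrite the obstruction in terms of everywhere-local norms. For a field $F$ with $k\subseteq F\subseteq L$ write $\mathrm{Nr}_{F/k}=N_{F/k}(\bA_F^\times)\cap k^\times$; unwinding the idele norm place by place shows that $\mathrm{Nr}_{F/k}$ is exactly the group of $a\in k^\times$ that are local norms from $F$ at every place $v$ (for $L/k$ Galois the semilocal norm groups at $v$ coincide for all $w\mid v$). Since $N_{L/k}=N_{K/k}\circ N_{L/K}$, the definition becomes
\[
\mathrm{Obs}_1(L/K/k)=\mathrm{Nr}_{K/k}\big/\big(\mathrm{Nr}_{L/k}\cdot N_{K/k}(K^\times)\big).
\]

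Next I would set up the class field theory dictionary. Let $\theta_v\colon k_v^\times\to G_v^{ab}$ be the local reciprocity map and assemble the $\theta_v$ into $\lambda\colon \bA_k^\times\to\bigoplus_v G_v^{ab}$, so that $\varphi_2\circ\lambda=\rho$ is the global Artin map for $L/k$, with $\rho$ surjective and $\ker\rho=k^\times N_{L/k}(\bA_L^\times)$; write $\rho_K\colon\bA_K^\times\to H^{ab}$ and $\lambda_K\colon\bA_K^\times\to\bigoplus_{v,w}H_w^{ab}$ for the analogues attached to $L/K$. Three facts then drop out: (i) $\ker(\lambda|_{k^\times})=\mathrm{Nr}_{L/k}$, since $\ker\theta_v=N_{L_{\tilde w}/k_v}(L_{\tilde w}^\times)$; (ii) $\ker\varphi_2=\lambda(k^\times)$, by lifting an element of $\ker\varphi_2$ to an idele (using surjectivity of $\lambda$) and invoking $\ker\rho=k^\times N_{L/k}(\bA_L^\times)$ together with $\lambda(N_{L/k}\bA_L^\times)=0$; and (iii) from the local norm-group description $N_{K_w/k_v}(K_w^\times)=\theta_v^{-1}(\mathrm{im}(H_w^{ab}\to G_v^{ab}))$ and $\mathrm{im}(\psi_2|_v)=\sum_{w\mid v}\mathrm{im}(H_w^{ab}\to G_v^{ab})$ one gets $a\in\mathrm{Nr}_{K/k}\iff\lambda(a)\in\mathrm{im}\,\psi_2$. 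Combining these, $\lambda$ induces an isomorphism $\mathrm{Obs}_1(L/K/k)\simeq(\ker\varphi_2\cap\mathrm{im}\,\psi_2)\big/\lambda\big(N_{K/k}(K^\times)\big)$.

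The decisive step is then the arithmetic identity $\lambda(N_{K/k}(K^\times))=\psi_2(\ker\varphi_1)$. The inclusion $\subseteq$ follows from the place-by-place norm functoriality $\rho\circ N_{K/k}=\psi_1\circ\rho_K$, since a global $b\in K^\times$ satisfies $\rho_K(b)=0$, i.e. $\varphi_1(\lambda_K b)=0$. For $\supseteq$, given $\gamma\in\ker\varphi_1$ I would lift each component through the surjective local maps to an idele $\beta\in\bA_K^\times$ with $\lambda_K(\beta)=\gamma$; then $\rho_K(\beta)=\varphi_1(\gamma)=0$ forces $\beta\in\ker\rho_K=K^\times N_{L/K}(\bA_L^\times)$, so $\beta=b\cdot N_{L/K}(\delta)$ with $b\in K^\times$, whence $\psi_2(\gamma)=\lambda(N_{K/k}\beta)=\lambda(N_{K/k}b)$ because $\lambda(N_{L/k}\delta)=0$. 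With this identity a purely formal chase in the commutative square shows that $\psi_2(\beta)\mapsto\varphi_1(\beta)$ is a well-defined isomorphism $(\ker\varphi_2\cap\mathrm{im}\,\psi_2)/\psi_2(\ker\varphi_1)\xrightarrow{\sim}(\ker\psi_1\cap\mathrm{im}\,\varphi_1)/\varphi_1(\ker\psi_2)$.

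Finally I would remove the factor $\cap\,\mathrm{im}\,\varphi_1$: by Chebotarev every $h\in H$ is realized in a decomposition group $G_v=\langle h\rangle$ at some unramified $v$, and the double coset $HG_v$ gives $H_w=H\cap G_v\ni h$, so $h\in\mathrm{im}\,\varphi_1$; hence $\mathrm{im}\,\varphi_1=H^{ab}$ and the target collapses to $\ker\psi_1/\varphi_1(\ker\psi_2)$, as claimed. I expect the identity $\lambda(N_{K/k}(K^\times))=\psi_2(\ker\varphi_1)$, and in particular its inclusion $\supseteq$, to be the crux: this is precisely where both reciprocity laws must be used at once to assemble prescribed local norms into a genuine global norm from $K$ modulo global norms from $L$, and making the decomposition $\beta=b\cdot N_{L/K}(\delta)$ interact correctly with $N_{K/k}$ is the delicate bookkeeping. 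By contrast the formal square-chase and the Chebotarev step are comparatively routine.
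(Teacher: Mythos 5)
Your proposal is correct. The paper itself contains no proof of this statement — it is quoted as Drakokhrust and Platonov's theorem (\cite{PD85a}, \cite{PD85b}, \cite[Theorem 1]{DP87}) — and your argument (the local--global class field theory dictionary via the maps $\theta_v$ and $\lambda$, the key identity $\lambda(N_{K/k}(K^\times))=\psi_2({\rm Ker}\,\varphi_1)$, the formal chase in the square, and the Chebotarev argument giving ${\rm Im}\,\varphi_1=H/[H,H]$) is essentially the original proof in \cite{DP87}, so you have reconstructed the route taken by the paper's cited source rather than found a different one.
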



Let $\psi_2^{v}$ be the restriction of $\psi_2$ to the subgroup 
$\bigoplus_{w\mid v} H_w/[H_w,H_w]$ with respect to $v\in V_k$ 
and $\psi_2^{\rm nr}$ (resp. $\psi_2^{\rm r}$) be 
the restriction of $\psi_2$ to the unramified (resp. the ramified) 
places $v$ of $k$. 
\begin{proposition}[{Drakokhrust and Platonov \cite{DP87}}]\label{propDP}
Let $k$, 
$L\supset K\supset k$, 
$G$ and $H$ be as in Theorem \ref{thDP2}.\\
{\rm (i)} $($\cite[Lemma 1]{DP87}$)$ 
Places $w_i\mid v$ of $K$ are in one-to-one correspondence 
with the set of double cosets in the decomposition 
$G=\cup_{i=1}^{r_v} Hx_iG_v$ where $H_{w_i}=H\cap x_iG_vx_i^{-1}$;\\
{\rm (ii)} $($\cite[Lemma 2]{DP87}$)$ 
If $G_{v_1}\leq G_{v_2}$, then $\varphi_1({\rm Ker}\,\psi_2^{v_1})\subset \varphi_1({\rm Ker}\,\psi_2^{v_2})$;\\
{\rm (iii)} $($\cite[Theorem 2]{DP87}$)$ 
$\varphi_1({\rm Ker}\,\psi_2^{\rm nr})=\Phi^G(H)/[H,H]$ 
where $\Phi^G(H)=\langle [h,x]\mid h\in H\cap xHx^{-1}, x\in G\rangle$;\\
{\rm (iv)} $($\cite[Lemma 8]{DP87}$)$ If $[K:k]=p^r$ $(r\geq 1)$ 
and ${\rm Obs}(K_p/k_p)=1$ where $k_p=L^{G_p}$, $K_p=L^{H_p}$, 
$G_p$ and $H_p\leq H\cap G_p$ are $p$-Sylow subgroups of $G$ and $H$ 
respectively, then ${\rm Obs}(K/k)=1$.
\end{proposition}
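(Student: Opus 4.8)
The plan is to prove the four assertions one at a time, each by reading information off the commutative diagram of Theorem~\ref{thDP2} together with its explicit formula $\psi_2(h[H_w,H_w]) = x^{-1}hx\,[G_v,G_v]$; global input enters only through the Chebotarev density theorem in (iii) and through a degree/transfer argument in (iv). For (i) I would argue purely in terms of the Galois action on places. Fixing a place $\tilde v$ of $L$ above $v$, transitivity of $G$ on the places of $L$ above $v$ with stabiliser $G_v$ identifies that set with $G/G_v$. The places of $K=L^H$ above $v$ are then the $H$-orbits on $G/G_v$, i.e. the double cosets in $G=\bigcup_{i=1}^{r_v}Hx_iG_v$, which gives the correspondence with $w_1,\dots,w_{r_v}$. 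Since $x_i\tilde v$ has decomposition group $x_iG_vx_i^{-1}$ in $G$, the local group ${\rm Gal}(L_{w_i}/K_{w_i})$ is the part of it fixing $K$, namely $H\cap x_iG_vx_i^{-1}=H_{w_i}$. This is routine once transitivity is invoked.

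For (ii) I would unwind the definition of $\psi_2^{v}$: its kernel consists of the tuples $(h_w[H_w,H_w])_{w\mid v}$ for which $\prod_{w\mid v}x_w^{-1}h_wx_w$ is trivial in $G_v/[G_v,G_v]$, and $\varphi_1$ sends such a tuple to $\prod_{w\mid v}h_w\,[H,H]$. Given $G_{v_1}\le G_{v_2}$, I would refine the double-coset decomposition attached to $v_1$ through that attached to $v_2$ and transport any relation witnessing membership in ${\rm Ker}\,\psi_2^{v_1}$ into a relation for $v_2$ with the same $\varphi_1$-image; the only delicate point is the bookkeeping of coset representatives.

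The hard part is (iii), the identification $\varphi_1({\rm Ker}\,\psi_2^{\rm nr})=\Phi^G(H)/[H,H]$. My plan is first to replace ``unramified $v$'' by ``cyclic subgroup $C\le G$'': an unramified place has cyclic $G_v$, and by the Chebotarev density theorem every cyclic subgroup occurs as such a $G_v$, so $\varphi_1({\rm Ker}\,\psi_2^{\rm nr})=\sum_{C}\varphi_1({\rm Ker}\,\psi_2^{C})$ over cyclic $C$. For a fixed cyclic $G_v=C$ I would compute $\varphi_1({\rm Ker}\,\psi_2^{C})$ directly from the double cosets of (i): because $C$ is abelian, the conjugations $h\mapsto x^{-1}hx$ turn a kernel element into commutator relations, and its $\varphi_1$-image lies in the subgroup generated by the classes $[h,x]\,[H,H]$ with $h\in H\cap xHx^{-1}$. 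The containment $\varphi_1({\rm Ker}\,\psi_2^{\rm nr})\subseteq\Phi^G(H)/[H,H]$ then follows quickly, while the reverse inclusion is where the real work lies: for each generator $[h,x]$ one must exhibit a cyclic $C$ and an explicit element of ${\rm Ker}\,\psi_2^{C}$ realising it, and verifying that every generator of $\Phi^G(H)$ is captured this way---and nothing outside it---is the step I expect to be most delicate.

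For (iv) I would first observe that ${\rm Obs}(K/k)$ is a finite $p$-group: since $N_{K/k}(a)=a^{[K:k]}$ for $a\in k^\times$, the group $k^\times/N_{K/k}(K^\times)$, and hence its subquotient ${\rm Obs}(K/k)\simeq\Sha(T)$ (Theorem~\ref{thOno}), is annihilated by $[K:k]=p^r$. Passing to the $p$-Sylow subgroups $G_p\le G$ and $H_p\le H\cap G_p$, the extension $k_p=L^{G_p}$ has degree $[G:G_p]$ prime to $p$, so a restriction--corestriction argument makes multiplication by $[G:G_p]$ invertible on the $p$-group ${\rm Obs}(K/k)$ and thereby detects it on the Sylow subextension $K_p=L^{H_p}$ over $k_p$; hence ${\rm Obs}(K_p/k_p)=1$ forces ${\rm Obs}(K/k)=1$. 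The point needing care is matching the tori under base change to $k_p$, which is precisely why one chooses $H_p\le H\cap G_p$.
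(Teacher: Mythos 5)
The paper itself offers no proof of Proposition \ref{propDP}: all four parts are imported verbatim from Drakokhrust--Platonov \cite{DP87} (their Lemmas 1, 2, 8 and Theorem 2), so your attempt can only be measured against that source, not against an in-paper argument. Parts (i) and (ii) of your proposal are essentially correct and complete: (i) is the standard identification of the places of $K$ above $v$ with the $H$-orbits on $G/G_v$, and in (ii) the bookkeeping you defer genuinely goes through --- writing a $v_1$-representative as $x_i=a_iy_jb_i$ with $a_i\in H$, $b_i\in G_{v_2}$, conjugation by $a_i$ is invisible modulo $[H,H]$ and conjugation by $b_i$ is invisible modulo $[G_{v_2},G_{v_2}]$, so every kernel element for $v_1$ transports to one for $v_2$ with the same $\varphi_1$-image.

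In (iii) and (iv), however, the steps you explicitly postpone are the actual content, so as written these parts remain unproven. For (iii), the Chebotarev reduction to cyclic $C=G_v$ is the right move, but neither inclusion is then ``quick'': for $\subseteq$, a kernel element can be supported on many double cosets, and one must first decompose it into pair-supported kernel elements (this works because the subgroup lattice of a cyclic group is distributive) before the two-slot identity $h\cdot x^{-1}h^{-1}x=[h^{-1},x]$ applies; for $\supseteq$, given $h\in H\cap xHx^{-1}$ with $x\notin H$, take $G_v=\langle h\rangle$, note that $HG_v=H$ and $Hx^{-1}G_v$ are then distinct double cosets, and check that the tuple with $h$ in the slot of $H$ and $x^{-1}h^{-1}x$ in the slot of $Hx^{-1}G_v$ lies in ${\rm Ker}\,\psi_2^{v}$ and has $\varphi_1$-image $[h^{-1},x]\,[H,H]$ (the case $x\in H$ is trivial since then $[h,x]\in[H,H]$; and as $h$ runs over $H\cap xHx^{-1}$ so does $h^{-1}$). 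For (iv), the ``matching of tori'' you wave at is precisely the lemma: base change gives $T\times_k k_p=R^{(1)}_{(K\otimes_k k_p)/k_p}(\bG_m)$, where $K\otimes_k k_p$ is an \'{e}tale algebra whose factors are indexed by $H\backslash G/G_p$, and the choice $H_p\le H\cap G_p$ only guarantees that $K_p$ is \emph{one} of these factors (forcing $H_p=H\cap G_p$); knowing the Hasse norm principle for that single factor says nothing a priori about $\Sha$ of the whole base-changed torus. What closes the argument is the hypothesis $[K:k]=p^r$: it gives $[G_p:H_p]=p^r=[G:H]$, hence $|HG_p|=|H|\,|G_p|/|H_p|=|H|\,[G:H]=|G|$, i.e. $G=HG_p$, so there is a single double coset and $K\otimes_k k_p\simeq K_p$ is a field, whence $T\times_k k_p\simeq R^{(1)}_{K_p/k_p}(\bG_m)$. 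Only then does your restriction--corestriction scheme conclude: ${\rm Res}(\alpha)\in\Sha(T\times_k k_p)\simeq{\rm Obs}(K_p/k_p)=1$ by Theorem \ref{thOno} over $k_p$, so $\alpha^{[G:G_p]}={\rm Cores}({\rm Res}(\alpha))=1$, and since $\alpha$ is $p^r$-torsion and $[G:G_p]$ is prime to $p$, $\alpha=1$. Without the observation $G=HG_p$ the proof of (iv) stalls exactly where you flagged it.
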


We note that if $L/k$ is an unramified extension, 
then $A(T)=0$ and $H^1(G,[J_{G/H}]^{fl})\simeq \Sha(T)\simeq 
{\rm Obs}(K/k)$ where $T=R^{(1)}_{K/k}(\bG_m)$ 
(see Theorem \ref{thV} and Theorem \ref{thOno}). 

\begin{theorem}[{Drakokhrust \cite[Theorem 1]{Dra89}, see also Opolka \cite[Satz 3]{Opo80}}]\label{thDra89}
Let $k$, 
$L\supset K\supset k$, 
$G$ and $H$ be as in Theorem \ref{thDP2}. 
Assume that $\widetilde{L}\supset L\supset k$ is 
a tower of Galois extensions with 
$\widetilde{G}={\rm Gal}(\widetilde{L}/k)$ 
and $\widetilde{H}={\rm Gal}(\widetilde{L}/K)$ 
which correspond to a central extension 
$1\to A\to \widetilde{G}\to G\to 1$ with 
$A\cap[\widetilde{G},\widetilde{G}]\simeq M(G)=H^2(G,\bC^\times)$; 
the Schur multiplier of $G$ 
$($this is equivalent to 
the inflation 
$M(G)\to M(\widetilde{G})$ being the zero map, 
see {\rm Beyl and Tappe \cite[Proposition 2.13, page 85]{BT82}}$)$. 
Then 
${\rm Obs}(K/k)={\rm Obs}_1(\widetilde{L}/K/k)$. 
In particular, if $\widetilde{G}$ is a Schur cover of $G$, 
i.e. $A\simeq M(G)$, then ${\rm Obs}(K/k)={\rm Obs}_1(\widetilde{L}/K/k)$. 
\end{theorem}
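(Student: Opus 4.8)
The plan is to reduce the claimed equality to a single idelic inclusion and then to extract that inclusion from the defining property of the central extension. First I would apply the analogue of the identity in part (ii) of the note following the definition, which is valid for any normal tower and in particular for $\widetilde{L}\supseteq K\supseteq k$: it gives ${\rm Obs}_1(\widetilde{L}/K/k)={\rm Obs}(K/k)/(N_{\widetilde{L}/k}(\bA_{\widetilde{L}}^\times)\cap k^\times)$, exhibiting ${\rm Obs}_1(\widetilde{L}/K/k)$ as the quotient of ${\rm Obs}(K/k)$ by the image of the everywhere-local-norm group $N_{\widetilde{L}/k}(\bA_{\widetilde{L}}^\times)\cap k^\times$. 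Since $\widetilde{L}\supseteq K$ yields $N_{\widetilde{L}/k}=N_{K/k}\circ N_{\widetilde{L}/K}$ and hence the inclusion $N_{\widetilde{L}/k}(\bA_{\widetilde{L}}^\times)\cap k^\times\subseteq N_{K/k}(\bA_K^\times)\cap k^\times$ for free, the assertion ${\rm Obs}(K/k)={\rm Obs}_1(\widetilde{L}/K/k)$ is equivalent to the single inclusion
\begin{align*}
N_{\widetilde{L}/k}(\bA_{\widetilde{L}}^\times)\cap k^\times\subseteq N_{K/k}(K^\times),
\end{align*}
i.e. every element of $k^\times$ that is a local norm from the large field $\widetilde{L}$ at every place must already be a global norm from $K$.

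Next I would translate this inclusion into group cohomology along the Galois layers $\widetilde{L}\supseteq L\supseteq k$. By Theorem \ref{thTate} the knot group of a Galois extension is a cokernel of corestriction maps into $\widehat{H}^{-3}(\cdot,\bZ)$, and $\widehat{H}^{-3}(G,\bZ)\simeq M(G)$, $\widehat{H}^{-3}(\widetilde{G},\bZ)\simeq M(\widetilde{G})$. As the statement already records, the hypothesis $A\cap[\widetilde{G},\widetilde{G}]\simeq M(G)$ is equivalent, via the five-term homology sequence of the central extension $1\to A\to\widetilde{G}\to G\to 1$ and Beyl--Tappe, to the inflation $M(G)\to M(\widetilde{G})$ being the zero map (equivalently, the transgression $H_2(G,\bZ)\to A$ is injective). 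The comparison between the knot groups of $L/k$ and $\widetilde{L}/k$, and, through the transfer from $H$ to $G$, with the non-Galois obstruction ${\rm Obs}(K/k)$, is governed precisely by this inflation; its vanishing therefore forces the image of the $\widetilde{L}$-norms in ${\rm Obs}(K/k)$ to be trivial, which is exactly the inclusion above.

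The main obstacle lies in making the last step effective: one must genuinely manufacture a global element of $K^\times$ out of a compatible family of local $\widetilde{L}$-norms. This is Opolka's representation result \cite[Satz 3]{Opo80}: the central extension realizing the full Schur multiplier resolves the knot because the obstruction to gluing the local norms, a class in $M(G)$, becomes a coboundary after inflation to $\widetilde{G}$. The delicate points to check are that the decomposition groups $\widetilde{G}_v$ lying over $G_v$ interact correctly with the central subgroup $A$ at the ramified places, where the extension need not split locally, and that the corestriction maps of Theorem \ref{thTate} commute with inflation, so that a class trivial in $M(\widetilde{G})$ really does originate from global $\widetilde{L}$-norms. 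Once the inflation $M(G)\to M(\widetilde{G})$ is zero these obstructions vanish together and the inclusion follows. The final ``in particular'' clause is then immediate: for a Schur cover one has $A\subseteq[\widetilde{G},\widetilde{G}]$ and $A\simeq M(G)$, so $A\cap[\widetilde{G},\widetilde{G}]=A\simeq M(G)$ and the hypothesis holds automatically.
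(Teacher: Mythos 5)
Your proposal is sound and takes a genuinely different route from the paper's. The paper's proof is purely an assembly of citations: Drakokhrust's Theorem 1 says that under the hypothesis $A\cap[\widetilde{G},\widetilde{G}]\simeq M(G)$ the expression ${\rm Ker}\,\widetilde{\psi}_1/\widetilde{\varphi}_1({\rm Ker}\,\widetilde{\psi}_2)$ formed for the tower $\widetilde{L}\supset K\supset k$ computes the \emph{total} obstruction ${\rm Obs}(K/k)$, while the proof of Drakokhrust's Proposition 1 (equivalently, Theorem \ref{thDP2} applied to that tower) identifies the same expression with the first obstruction ${\rm Obs}_1(\widetilde{L}/K/k)$; equality follows. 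You instead reduce the claim, directly from the definitions, to the single inclusion $N_{\widetilde{L}/k}(\bA_{\widetilde{L}}^\times)\cap k^\times\subseteq N_{K/k}(K^\times)$ --- this reduction is correct, and it is the honest idelic content of the theorem --- and you then deduce that inclusion from Opolka's Satz 3. That is a legitimate and non-circular assembly (citing Drakokhrust's Theorem 1 itself would have been circular, since it \emph{is} the statement), and it parallels the paper's own reliance on the literature for the cohomological heart of the matter. Two caveats. First, Opolka's knot-resolution theorem concerns the Galois extension $L/k$: it gives $N_{\widetilde{L}/k}(\bA_{\widetilde{L}}^\times)\cap k^\times\subseteq N_{L/k}(L^\times)$, so you must add the (trivial but necessary) bridge $N_{L/k}(L^\times)=N_{K/k}(N_{L/K}(L^\times))\subseteq N_{K/k}(K^\times)$; as written you describe Opolka's result as manufacturing elements of $K^\times$ directly, which elides this step. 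Second, your middle paragraph (Tate's theorem plus ``the comparison \dots is governed precisely by this inflation'') is not load-bearing and would not pass as an argument; all the actual content of your proof lives in the reduction and in the citation of Opolka, and the write-up would be cleaner with that paragraph removed. As for what each route buys: yours isolates exactly which norm-theoretic statement the Schur-multiplier hypothesis is responsible for, making the theorem's meaning transparent; the paper's route delivers the computable group-theoretic formula ${\rm Ker}\,\widetilde{\psi}_1/\widetilde{\varphi}_1({\rm Ker}\,\widetilde{\psi}_2)$, which is what the GAP computations in Sections \ref{S3} and \ref{S4} actually consume.
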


Indeed, Drakokhrust \cite[Theorem 1]{Dra89} shows that 
${\rm Obs}(K/k)\simeq 
{\rm Ker}\, \widetilde{\psi}_1/\widetilde{\varphi}_1({\rm Ker}\, \widetilde{\psi}_2)$ where the maps $\widetilde{\psi}_1, \widetilde{\psi}_2$ and $\widetilde{\varphi}_1$ are defined as in 
\cite[page 31, the paragraph before Proposition 1]{Dra89}. 
The proof of \cite[Proposition 1]{Dra89} shows that 
this group is the same as ${\rm Obs}_1(\widetilde{L}/K/k)$ 
(see also \cite[Lemma 2, Lemma 3 and Lemma 4]{Dra89}). 

Note that the Schur multiplier 
$M(G)\simeq H^2(G,\bC^\times)\simeq H^2(G,\bQ/\bZ)\simeq H^3(G,\bZ)$. 
Hence if $M(G)=0$, i.e. $\widetilde{L}=L$, then ${\rm Obs}(K/k)={\rm Obs}_1(L/K/k)$. 
In addition, if $L/k$ is unramified extension, then 
${\rm Obs}(K/k)={\rm Obs}_1(L/K/k)
={\rm Ker}\, \psi_1/\varphi_1({\rm Ker}\, \psi_2^{\rm nr})\simeq 
{\rm Ker}\, \psi_1/(\Phi^G(H)/[H,H])$ 
(see Proposition \ref{propDP} (iii)). 

Hoshi, Kanai and Yamasaki \cite[Section 6]{HKY22} and \cite[Section 6]{HKY23}
made some related functions of GAP (\cite{GAP}) in order to perform 
Drakokhrust and Platonov's method (e.g. Theorem \ref{thDP2} and Theorem \ref{thDra89}) 
which are also available as in \cite{Norm1ToriHNP}.
We will use such GAP functions in the proof of 
Theorem \ref{thmain1}, Theorem \ref{thmain1-2}, Theorem \ref{thmain2} 
and Theorem \ref{thmain2-2}. 

\section{Proof of Theorem {\ref{thmain1}}, Theorem {\ref{thmain1-2}}, Theorem {\ref{thmain2}} and Theorem {\ref{thmain2-2}}}\label{S3}

Let $M_{11}$ be the Mathieu group of degree $11$ 
and $J_1$ be the Janko group of order $175560$. 
Let $K/k$ be a separable field extension and $L/k$ be the Galois closure 
of $K/k$. 
Assume that $G={\rm Gal}(L/k)\simeq M_{11}$ or $J_1$, 
and $H={\rm Gal}(L/K)\lneq G$.\\

{\it Proof of Theorem \ref{thmain1} and Theorem \ref{thmain1-2}.}

Assume that $G\simeq M_{11}$ (resp. $J_1$). 
Then we have the trivial Schur multiplier 
$M(G)\simeq H^2(G,\bC^\times)\simeq H^2(G,\bQ/\bZ)\simeq H^3(G,\bZ)=0$. 
By Theorem \ref{thDra89}, we have ${\rm Obs}(K/k)={\rm Obs}_1(L/K/k)$. 
It follows from Theorem \ref{thV} and Theorem \ref{thOno} that 
if $L/k$ is an unramified extension, 
then $A(T)=0$ and $H^1(G,[J_{G/H}]^{fl})\simeq \Sha(T)\simeq 
{\rm Obs}(K/k)$ where $T=R^{(1)}_{K/k}(\bG_m)$. 
This implies that 
${\rm Obs}(K/k)={\rm Obs}_1(L/K/k)
={\rm Ker}\, \psi_1/\varphi_1({\rm Ker}\, \psi_2^{\rm nr})$ 
when $L/k$ is an unramified extension. 

By applying the GAP functions  
{\tt FirstObstructionN($G$)} and {\tt FirstObstructionDnr($G$)} 
(see \cite[Section 6]{HKY22} and \cite[Section 6]{HKY23}), 
we obtain that $H^1(k,{\rm Pic}\,\overline{X})\simeq H^1(G,[J_{G/H}]^{fl})\simeq \Sha(T)\simeq $
${\rm Ker}\, \psi_1/\varphi_1({\rm Ker}\, \psi_2^{\rm nr})$ 
as in Table $1$ and Table $2$ (resp. Table $3$ and Table $4$), 
see Section \ref{S4} (resp. Section \ref{S5}) for GAP computations. 
For retract $k$-rationality, see also \cite[Section 3]{HKY22} and \cite[Section 4]{HKY23}.\qed\\

{\it Proof of Theorem \ref{thmain2} and Theorem \ref{thmain2-2}.}

Assume that $G\simeq M_{11}$ (resp. $J_1$). 
It follows from $M(G)=0$ and Theorem \ref{thDra89} that 
${\rm Obs}(K/k)={\rm Obs}_1(L/K/k)$. 
As the same in Hoshi, Kanai and Yamasaki \cite[Section 7, Proof of Theorem 1.3]{HKY23}, 
we apply the function {\tt FirstObstructionDr($G,G_{v_{r,s}}$)} 
to representatives of 
the orbit 
${\rm Orb}_{N_G(H)\backslash G/N_G(G_{v_{r,s}})}(G_{v_{r,s}})$ 
of $G_{v_{r,s}}\leq G$ under the conjugate action of $G$ 
which corresponds to the double coset 
$N_G(H)\backslash G/N_G(G_{v_{r,s}})$ 
with ${\rm Orb}_{G/N_G(G_{v_r})}(G_{v_r})$
$=$$\bigcup_{s=1}^{u_r}{\rm Orb}_{N_G(H)\backslash G/N_G(G_{v_{r,s}})}(G_{v_{r,s}})$ 
corresponding to $r$-th subgroup $G_{v_r}\leq G$ up to conjugacy 
via the function
\begin{center}
{\tt ConjugacyClassesSubgroupsNGHOrbitRep(ConjugacyClassesSubgroups($G$),$H$)}.
\end{center}
Then we can get the minimal elements of the $G_{v_{r,s}}$'s 
with ${\rm Ker}\, \psi_1/\varphi_1({\rm Ker}\, \psi_2)=0$ 
via the function 
\begin{center}
{\tt MinConjugacyClassesSubgroups($l$)}.
\end{center}

Finally, we get a necessary and sufficient condition for 
${\rm Obs}(K/k)={\rm Obs}_1(L/K/k)$ $=$ $1$ for each $H\lneq G$ 
in Table $2$ (resp. Table $4$) by the case-by-case analysis.  
We can apply Macedo and Newton \cite[Corollary 3.4]{MN22} 
and then it is enough to check only the cases of 
$2$-Sylow subgroups of $H={\rm Gal}(L/K)$ in Table $2$ (resp. Table $4$), 
i.e. $H\simeq C_2, C_4, C_8$ (resp. $C_2$), 
see Section \ref{S4} (resp. Section \ref{S5}) for GAP computations. 
For $G\simeq J_1$, 
by applying Macedo and Newton \cite[Corollary 3.4]{MN22} again, 
we should check only one of the $H$'s in Table $4$ 
(e.g. $H\simeq C_{19}\rtimes C_6$ with the minimal 
$[G:H]=[K:k]=2^2\cdot 5\cdot 7\cdot 11=1540)$ 
because all the caseses have a 2-Sylow subgroup $\simeq C_2$ 
(see Section \ref{S5}).\qed
%
\section{GAP computations: The $M_{11}$ case}\label{S4}
\begin{verbatim}
gap> Read("FlabbyResolutionFromBase.gap");
gap> Read("HNP.gap");

gap> M11:=MathieuGroup(11); # G=M11
Group([ (1,2,3,4,5,6,7,8,9,10,11), (3,7,11,8)(4,10,5,6) ])
gap> Order(M11); # |G|=7920=2^4*3^2*5*11
7920
gap> M11cs:=ConjugacyClassesSubgroups2(M11);; # subgroups H of G up to conjugacy
gap> Length(M11cs); # the number of H<=G up to conjugacy 
39
gap> M11H:=List([1..Length(M11cs)-1],x->Representative(M11cs[x]));; # H<G up to conjugacy 
gap> Length(M11H);
38
gap> List(M11H,Order);
[ 1, 2, 3, 4, 4, 5, 6, 6, 6, 8, 8, 8, 9, 10, 11, 12, 12, 16, 18, 18, 20, 24, 
  24, 36, 36, 36, 48, 55, 60, 60, 72, 72, 72, 120, 144, 360, 660, 720 ]
gap> List(M11H,StructureDescription);
[ "1", "C2", "C3", "C2 x C2", "C4", "C5", "S3", "S3", "C6", "Q8", "C8", "D8", 
  "C3 x C3", "D10", "C11", "A4", "D12", "QD16", "(C3 x C3) : C2", "C3 x S3", 
  "C5 : C4", "SL(2,3)", "S4", "(C3 x C3) : C4", "(C3 x C3) : C4", "S3 x S3", 
  "GL(2,3)", "C11 : C5", "A5", "A5", "(C3 x C3) : Q8", "(S3 x S3) : C2", 
  "(C3 x C3) : C8", "S5", "(C3 x C3) : QD16", "A6", "PSL(2,11)", "A6 . C2" ]
gap> GroupCohomology(M11,3); # the Schur multiplier H^3(M11,Z)=0 
[  ]
gap> Nker:=List(M11H,x->FirstObstructionN(M11,x).ker); # Obs1N
[ [ [  ], [ [  ], [  ] ] ], 
  [ [ 2 ], [ [ 2 ], [ [ 1 ] ] ] ], 
  [ [ 3 ], [ [ 3 ], [ [ 1 ] ] ] ], 
  [ [ 2, 2 ], [ [ 2, 2 ], 
  [ [ 1, 0 ], [ 0, 1 ] ] ] ], 
  [ [ 4 ], [ [ 4 ], [ [ 1 ] ] ] ], 
  [ [ 5 ], [ [ 5 ], [ [ 1 ] ] ] ], 
  [ [ 2 ], [ [ 2 ], [ [ 1 ] ] ] ], 
  [ [ 2 ], [ [ 2 ], [ [ 1 ] ] ] ], 
  [ [ 6 ], [ [ 6 ], [ [ 1 ] ] ] ], 
  [ [ 2, 2 ], [ [ 2, 2 ], 
  [ [ 1, 0 ], [ 0, 1 ] ] ] ], 
  [ [ 8 ], [ [ 8 ], [ [ 1 ] ] ] ], 
  [ [ 2, 2 ], [ [ 2, 2 ], 
  [ [ 1, 0 ], [ 0, 1 ] ] ] ], 
  [ [ 3, 3 ], [ [ 3, 3 ], 
  [ [ 1, 0 ], [ 0, 1 ] ] ] ], 
  [ [ 2 ], [ [ 2 ], [ [ 1 ] ] ] ], 
  [ [ 11 ], [ [ 11 ], [ [ 1 ] ] ] ], 
  [ [ 3 ], [ [ 3 ], [ [ 1 ] ] ] ], 
  [ [ 2, 2 ], [ [ 2, 2 ], 
  [ [ 1, 0 ], [ 0, 1 ] ] ] ], 
  [ [ 2, 2 ], [ [ 2, 2 ], 
  [ [ 1, 0 ], [ 0, 1 ] ] ] ], 
  [ [ 2 ], [ [ 2 ], [ [ 1 ] ] ] ], 
  [ [ 6 ], [ [ 6 ], [ [ 1 ] ] ] ], 
  [ [ 4 ], [ [ 4 ], [ [ 1 ] ] ] ], 
  [ [ 3 ], [ [ 3 ], [ [ 1 ] ] ] ], 
  [ [ 2 ], [ [ 2 ], [ [ 1 ] ] ] ], 
  [ [ 4 ], [ [ 4 ], [ [ 1 ] ] ] ], 
  [ [ 4 ], [ [ 4 ], [ [ 1 ] ] ] ], 
  [ [ 2, 2 ], [ [ 2, 2 ], 
  [ [ 1, 0 ], [ 0, 1 ] ] ] ], 
  [ [ 2 ], [ [ 2 ], [ [ 1 ] ] ] ], 
  [ [ 5 ], [ [ 5 ], [ [ 1 ] ] ] ], 
  [ [  ], [ [  ], [  ] ] ], 
  [ [  ], [ [  ], [  ] ] ], 
  [ [ 2, 2 ], [ [ 2, 2 ], 
  [ [ 1, 0 ], [ 0, 1 ] ] ] ], 
  [ [ 2, 2 ], [ [ 2, 2 ], 
  [ [ 1, 0 ], [ 0, 1 ] ] ] ], 
  [ [ 8 ], [ [ 8 ], [ [ 1 ] ] ] ], 
  [ [ 2 ], [ [ 2 ], [ [ 1 ] ] ] ], 
  [ [ 2, 2 ], [ [ 2, 2 ], 
  [ [ 1, 0 ], [ 0, 1 ] ] ] ], 
  [ [  ], [ [  ], [  ] ] ], 
  [ [  ], [ [  ], [  ] ] ], 
  [ [ 2 ], [ [ 2 ], [ [ 1 ] ] ] ] ]
gap> Dnr:=List(M11H,x->FirstObstructionDnr(M11,x).Dnr); # Obs1Dnr
[ [ [  ], [ [  ], [  ] ] ], 
  [ [  ], [ [ 2 ], [  ] ] ], 
  [ [ 3 ], [ [ 3 ], [ [ 1 ] ] ] ], 
  [ [ 2, 2 ], [ [ 2, 2 ], 
  [ [ 1, 0 ], [ 0, 1 ] ] ] ], 
  [ [ 2 ], [ [ 4 ], [ [ 2 ] ] ] ], 
  [ [ 5 ], [ [ 5 ], [ [ 1 ] ] ] ], 
  [ [  ], [ [ 2 ], [  ] ] ], 
  [ [  ], [ [ 2 ], [  ] ] ], 
  [ [ 3 ], [ [ 6 ], [ [ 2 ] ] ] ], 
  [ [ 2, 2 ], [ [ 2, 2 ], 
  [ [ 1, 0 ], [ 0, 1 ] ] ] ], 
  [ [ 4 ], [ [ 8 ], [ [ 2 ] ] ] ], 
  [ [ 2, 2 ], [ [ 2, 2 ], 
  [ [ 1, 0 ], [ 0, 1 ] ] ] ], 
  [ [ 3, 3 ], [ [ 3, 3 ], 
  [ [ 1, 0 ], [ 0, 1 ] ] ] ], 
  [ [  ], [ [ 2 ], [  ] ] ], 
  [ [ 11 ], [ [ 11 ], [ [ 1 ] ] ] ], 
  [ [ 3 ], [ [ 3 ], [ [ 1 ] ] ] ], 
  [ [ 2, 2 ], [ [ 2, 2 ], 
  [ [ 1, 0 ], [ 0, 1 ] ] ] ], 
  [ [ 2, 2 ], [ [ 2, 2 ], 
  [ [ 1, 0 ], [ 0, 1 ] ] ] ], 
  [ [  ], [ [ 2 ], [  ] ] ], 
  [ [ 3 ], [ [ 6 ], [ [ 2 ] ] ] ], 
  [ [ 2 ], [ [ 4 ], [ [ 2 ] ] ] ], 
  [ [ 3 ], [ [ 3 ], [ [ 1 ] ] ] ], 
  [ [ 2 ], [ [ 2 ], [ [ 1 ] ] ] ], 
  [ [ 2 ], [ [ 4 ], [ [ 2 ] ] ] ], 
  [ [ 2 ], [ [ 4 ], [ [ 2 ] ] ] ], 
  [ [ 2, 2 ], [ [ 2, 2 ], 
  [ [ 1, 0 ], [ 0, 1 ] ] ] ], 
  [ [ 2 ], [ [ 2 ], [ [ 1 ] ] ] ], 
  [ [ 5 ], [ [ 5 ], [ [ 1 ] ] ] ], 
  [ [  ], [ [  ], [  ] ] ], 
  [ [  ], [ [  ], [  ] ] ], 
  [ [ 2, 2 ], [ [ 2, 2 ], [ [ 1, 0 ], [ 0, 1 ] ] ] ], 
  [ [ 2, 2 ], [ [ 2, 2 ], [ [ 1, 0 ], [ 0, 1 ] ] ] ], 
  [ [ 4 ], [ [ 8 ], [ [ 2 ] ] ] ], 
  [ [ 2 ], [ [ 2 ], [ [ 1 ] ] ] ], 
  [ [ 2, 2 ], [ [ 2, 2 ], [ [ 1, 0 ], [ 0, 1 ] ] ] ], 
  [ [  ], [ [  ], [  ] ] ], 
  [ [  ], [ [  ], [  ] ] ], 
  [ [ 2 ], [ [ 2 ], [ [ 1 ] ] ] ] ]
gap> H1F:=List([1..Length(M11H)],x->AbelianInvariantsGoverH(Nker[x][2],Dnr[x][2])); 
# abelian invariants of Nker/Dnr=Obs1N/Obs1Dnr=H^1(G,F) with F=[J_{G/H}]^{fl} 
[ [  ], [ 2 ], [  ], [  ], [ 2 ], [  ], [ 2 ], [ 2 ], [ 2 ], [  ], [ 2 ], 
  [  ], [  ], [ 2 ], [  ], [  ], [  ], [  ], [ 2 ], [ 2 ], [ 2 ], [  ], [  ], 
  [ 2 ], [ 2 ], [  ], [  ], [  ], [  ], [  ], [  ], [  ], [ 2 ], [  ], [  ], 
  [  ], [  ], [  ] ]
gap> Collected(H1F);
[ [ [  ], 25 ], [ [ 2 ], 13 ] ]
gap> H1F1:=Filtered([1..Length(M11H)],x->H1F[x]=[]); # H^1(G,F)=1
[ 1, 3, 4, 6, 10, 12, 13, 15, 16, 17, 18, 22, 23, 26, 27, 28, 29, 30, 31, 32, 
  34, 35, 36, 37, 38 ]
gap> Length(H1F1);
25
gap> List(M11H{H1F1},StructureDescription);
[ "1", "C3", "C2 x C2", "C5", "Q8", "D8", "C3 x C3", "C11", "A4", "D12", "QD16",
  "SL(2,3)", "S4", "S3 x S3", "GL(2,3)", "C11 : C5", "A5", "A5", "(C3 x C3) : Q8",
  "(S3 x S3) : C2", "S5", "(C3 x C3) : QD16", "A6", "PSL(2,11)", "A6 . C2" ]
gap> H1F2:=Filtered([1..Length(M11H)],x->H1F[x]=[2]); # H^1(G,F)=Z/2Z
[ 2, 5, 7, 8, 9, 11, 14, 19, 20, 21, 24, 25, 33 ]
gap> Length(H1F2);
13
gap> List(M11H{H1F2},StructureDescription);
[ "C2", "C4", "S3", "S3", "C6", "C8", "D10", "(C3 x C3) : C2", "C3 x S3", "C5 : C4",
  "(C3 x C3) : C4", "(C3 x C3) : C4", "(C3 x C3) : C8" ]

gap> c1:=Filtered(M11H,x->Order(x) mod 2=0);;
gap> Length(c1);
32
gap> c2:=Filtered(c1,x->IsCyclic(SylowSubgroup(x,2)));;
gap> Length(c2);
13
gap> List(c2,x->Position(M11H,x));
[ 2, 5, 7, 8, 9, 11, 14, 19, 20, 21, 24, 25, 33 ]
gap> last=H1F2;
true
\end{verbatim}~\vspace*{-5mm}\\

\begin{verbatim}
gap> FirstObstructionN(M11,M11H[2]).ker; # H=C2
[ [ 2 ], [ [ 2 ], [ [ 1 ] ] ] ]
gap> FirstObstructionDnr(M11,M11H[2]).Dnr;
[ [  ], [ [ 2 ], [  ] ] ]
gap> HNPtruefalsefn:=x->FirstObstructionDr(M11,x,M11H[2]).Dr[1]=[2];
function( x ) ... end
gap> GcsH:=ConjugacyClassesSubgroupsNGHOrbitRep(M11cs,M11H[2]);;
gap> GcsHHNPtf:=List(GcsH,x->List(x,HNPtruefalsefn));;
gap> Collected(List(GcsHHNPtf,Set));
[ [ [ true ], 20 ], [ [ false ], 19 ] ]
gap> GcsHNPfalse:=List(Filtered([1..Length(M11cs)],
> x->false in GcsHHNPtf[x]),y->M11cs[y]);;
gap> Length(GcsHNPfalse);
19
gap> GcsHNPtrue:=List(Filtered([1..Length(M11cs)],
> x->true in GcsHHNPtf[x]),y->M11cs[y]);;
gap> Length(GcsHNPtrue);
20
gap> Collected(List(GcsHNPfalse,x->StructureDescription(Representative(x))));
[ [ "(C3 x C3) : C2", 1 ], [ "(C3 x C3) : C4", 2 ], [ "(C3 x C3) : C8", 1 ], 
  [ "1", 1 ], [ "C11", 1 ], [ "C11 : C5", 1 ], [ "C2", 1 ], [ "C3", 1 ], 
  [ "C3 x C3", 1 ], [ "C3 x S3", 1 ], [ "C4", 1 ], [ "C5", 1 ], 
  [ "C5 : C4", 1 ], [ "C6", 1 ], [ "C8", 1 ], [ "D10", 1 ], [ "S3", 2 ] ]
gap> Collected(List(GcsHNPtrue,x->StructureDescription(Representative(x))));
[ [ "(C3 x C3) : Q8", 1 ], [ "(C3 x C3) : QD16", 1 ], [ "(S3 x S3) : C2", 1 ],
  [ "A4", 1 ], [ "A5", 2 ], [ "A6", 1 ], [ "A6 . C2", 1 ], [ "C2 x C2", 1 ], 
  [ "D12", 1 ], [ "D8", 1 ], [ "GL(2,3)", 1 ], [ "M11", 1 ], 
  [ "PSL(2,11)", 1 ], [ "Q8", 1 ], [ "QD16", 1 ], [ "S3 x S3", 1 ], 
  [ "S4", 1 ], [ "S5", 1 ], [ "SL(2,3)", 1 ] ]
gap> GcsHNPtrueMin:=MinConjugacyClassesSubgroups(GcsHNPtrue);;
gap> Collected(List(GcsHNPtrueMin,x->StructureDescription(Representative(x))));
[ [ "C2 x C2", 1 ], [ "Q8", 1 ] ]
\end{verbatim}~\vspace*{-5mm}\\

\begin{verbatim}
gap> FirstObstructionN(M11,M11H[5]).ker; # H=C4
[ [ 4 ], [ [ 4 ], [ [ 1 ] ] ] ]
gap> FirstObstructionDnr(M11,M11H[5]).Dnr;
[ [ 2 ], [ [ 4 ], [ [ 2 ] ] ] ]
gap> HNPtruefalsefn:=x->FirstObstructionDr(M11,x,M11H[5]).Dr[1]=[4];
function( x ) ... end
gap> GcsH:=ConjugacyClassesSubgroupsNGHOrbitRep(M11cs,M11H[5]);;
gap> GcsHHNPtf:=List(GcsH,x->List(x,HNPtruefalsefn));;
gap> Collected(List(GcsHHNPtf,Set));
[ [ [ true ], 13 ], [ [ false ], 26 ] ]
gap> GcsHNPfalse:=List(Filtered([1..Length(M11cs)],
> x->false in GcsHHNPtf[x]),y->M11cs[y]);;
gap> Length(GcsHNPfalse);
26
gap> GcsHNPtrue:=List(Filtered([1..Length(M11cs)],
> x->true in GcsHHNPtf[x]),y->M11cs[y]);;
gap> Length(GcsHNPtrue);
13
gap> Collected(List(GcsHNPfalse,x->StructureDescription(Representative(x))));
[ [ "(C3 x C3) : C2", 1 ], [ "(C3 x C3) : C4", 2 ], [ "(C3 x C3) : C8", 1 ], 
  [ "1", 1 ], [ "A4", 1 ], [ "A5", 2 ], [ "C11", 1 ], [ "C11 : C5", 1 ], 
  [ "C2", 1 ], [ "C2 x C2", 1 ], [ "C3", 1 ], [ "C3 x C3", 1 ], 
  [ "C3 x S3", 1 ], [ "C4", 1 ], [ "C5", 1 ], [ "C5 : C4", 1 ], [ "C6", 1 ], 
  [ "C8", 1 ], [ "D10", 1 ], [ "D12", 1 ], [ "PSL(2,11)", 1 ], [ "S3", 2 ], 
  [ "S3 x S3", 1 ] ]
gap> Collected(List(GcsHNPtrue,x->StructureDescription(Representative(x))));
[ [ "(C3 x C3) : Q8", 1 ], [ "(C3 x C3) : QD16", 1 ], [ "(S3 x S3) : C2", 1 ],
  [ "A6", 1 ], [ "A6 . C2", 1 ], [ "D8", 1 ], [ "GL(2,3)", 1 ], [ "M11", 1 ], 
  [ "Q8", 1 ], [ "QD16", 1 ], [ "S4", 1 ], [ "S5", 1 ], [ "SL(2,3)", 1 ] ]
gap> GcsHNPtrueMin:=MinConjugacyClassesSubgroups(GcsHNPtrue);;
gap> Collected(List(GcsHNPtrueMin,x->StructureDescription(Representative(x))));
[ [ "D8", 1 ], [ "Q8", 1 ] ]
\end{verbatim}~\vspace*{-5mm}\\

\begin{verbatim}
gap> FirstObstructionN(M11,M11H[11]).ker; # H=C8
[ [ 8 ], [ [ 8 ], [ [ 1 ] ] ] ]
gap> FirstObstructionDnr(M11,M11H[11]).Dnr;
[ [ 4 ], [ [ 8 ], [ [ 2 ] ] ] ]
gap> HNPtruefalsefn:=x->FirstObstructionDr(M11,x,M11H[11]).Dr[1]=[8];
function( x ) ... end
gap> GcsH:=ConjugacyClassesSubgroupsNGHOrbitRep(M11cs,M11H[11]);;
gap> GcsHHNPtf:=List(GcsH,x->List(x,HNPtruefalsefn));;
gap> Collected(List(GcsHHNPtf,Set));
[ [ [ true ], 5 ], [ [ false ], 34 ] ]
gap> GcsHNPfalse:=List(Filtered([1..Length(M11cs)],
> x->false in GcsHHNPtf[x]),y->M11cs[y]);;
gap> Length(GcsHNPfalse);
34
gap> GcsHNPtrue:=List(Filtered([1..Length(M11cs)],
> x->true in GcsHHNPtf[x]),y->M11cs[y]);;
gap> Length(GcsHNPtrue);
5
gap> Collected(List(GcsHNPfalse,x->StructureDescription(Representative(x))));
[ [ "(C3 x C3) : C2", 1 ], [ "(C3 x C3) : C4", 2 ], [ "(C3 x C3) : C8", 1 ], 
  [ "(C3 x C3) : Q8", 1 ], [ "(S3 x S3) : C2", 1 ], [ "1", 1 ], [ "A4", 1 ], 
  [ "A5", 2 ], [ "A6", 1 ], [ "C11", 1 ], [ "C11 : C5", 1 ], [ "C2", 1 ], 
  [ "C2 x C2", 1 ], [ "C3", 1 ], [ "C3 x C3", 1 ], [ "C3 x S3", 1 ], 
  [ "C4", 1 ], [ "C5", 1 ], [ "C5 : C4", 1 ], [ "C6", 1 ], [ "C8", 1 ], 
  [ "D10", 1 ], [ "D12", 1 ], [ "D8", 1 ], [ "PSL(2,11)", 1 ], [ "Q8", 1 ], 
  [ "S3", 2 ], [ "S3 x S3", 1 ], [ "S4", 1 ], [ "S5", 1 ], [ "SL(2,3)", 1 ] ]
gap> Collected(List(GcsHNPtrue,x->StructureDescription(Representative(x))));
[ [ "(C3 x C3) : QD16", 1 ], [ "A6 . C2", 1 ], [ "GL(2,3)", 1 ], 
  [ "M11", 1 ], [ "QD16", 1 ] ]
gap> GcsHNPtrueMin:=MinConjugacyClassesSubgroups(GcsHNPtrue);;
gap> Collected(List(GcsHNPtrueMin,x->StructureDescription(Representative(x))));
[ [ "QD16", 1 ] ]
\end{verbatim}~\vspace*{-5mm}\\
\section{GAP computations: The $J_1$ case}\label{S5}
\begin{verbatim}
gap> Read("FlabbyResolutionFromBase.gap");
gap> Read("HNP.gap");

gap> J1:=SimpleGroup("J1");; # G=J1
gap> Order(J1); # |G|=175560=2^3*3*5*7*11*19
175560
gap> J1cs:=ConjugacyClassesSubgroups2(J1);; # subgroups H of G up to conjugacy
gap> Length(J1cs);; # the number of H<=G up to conjugacy 
40
gap> J1H:=List([1..Length(J1cs)-1],x->Representative(J1cs[x]));; # H<G up to conjugacy 
gap> Length(J1H);
39
gap> List(J1H,Order);
[ 1, 2, 3, 4, 5, 6, 6, 6, 7, 8, 10, 10, 10, 11, 12, 12, 14, 15, 19, 20, 21, 22,
  60, 60, 60, 110, 114, 120, 168, 660 ]
gap> List(J1H,Order);
[ 1, 2, 3, 4, 5, 6, 6, 6, 7, 8, 10, 10, 10, 11, 12, 12, 14, 15, 19, 20, 21,
  22, 24, 30, 30, 30, 38, 42, 55, 56, 57, 60, 60, 60, 110, 114, 120, 168, 660 ]
gap> List(J1H,StructureDescription);
[ "1", "C2", "C3", "C2 x C2", "C5", "S3", "S3", "C6", "C7", "C2 x C2 x C2",
  "D10", "D10", "C10", "C11", "A4", "D12", "D14", "C15", "C19", "D20",
  "C7 : C3", "D22", "C2 x A4", "C3 x D10", "D30", "C5 x S3", "D38",
  "C7 : C6", "C11 : C5", "(C2 x C2 x C2) : C7", "C19 : C3", "A5", "A5",
  "S3 x D10", "C11 : C10", "C19 : C6", "C2 x A5",
  "(C2 x C2 x C2) : (C7 : C3)", "PSL(2,11)" ]
gap> GroupCohomology(J1,3); # the Schur multiplier H^3(J1,Z)=0 
[  ]
gap> Nker:=List(J1H,x->FirstObstructionN(J1,x).ker); # Obs1N
[ [ [  ], [ [  ], [  ] ] ], 
  [ [ 2 ], [ [ 2 ], [ [ 1 ] ] ] ], 
  [ [ 3 ], [ [ 3 ], [ [ 1 ] ] ] ], 
  [ [ 2, 2 ], [ [ 2, 2 ], [ [ 1, 0 ], [ 0, 1 ] ] ] ], 
  [ [ 5 ], [ [ 5 ], [ [ 1 ] ] ] ], 
  [ [ 2 ], [ [ 2 ], [ [ 1 ] ] ] ], 
  [ [ 2 ], [ [ 2 ], [ [ 1 ] ] ] ], 
  [ [ 6 ], [ [ 6 ], [ [ 1 ] ] ] ], 
  [ [ 7 ], [ [ 7 ], [ [ 1 ] ] ] ], 
  [ [ 2, 2, 2 ], [ [ 2, 2, 2 ], [ [ 1, 0, 0 ], [ 0, 1, 0 ], [ 0, 0, 1 ] ] ] ],
  [ [ 2 ], [ [ 2 ], [ [ 1 ] ] ] ], 
  [ [ 2 ], [ [ 2 ], [ [ 1 ] ] ] ], 
  [ [ 10 ], [ [ 10 ], [ [ 1 ] ] ] ], 
  [ [ 11 ], [ [ 11 ], [ [ 1 ] ] ] ], 
  [ [ 3 ], [ [ 3 ], [ [ 1 ] ] ] ], 
  [ [ 2, 2 ], [ [ 2, 2 ], [ [ 1, 0 ], [ 0, 1 ] ] ] ], 
  [ [ 2 ], [ [ 2 ], [ [ 1 ] ] ] ], 
  [ [ 15 ], [ [ 15 ], [ [ 1 ] ] ] ], 
  [ [ 19 ], [ [ 19 ], [ [ 1 ] ] ] ], 
  [ [ 2, 2 ], [ [ 2, 2 ], [ [ 1, 0 ], [ 0, 1 ] ] ] ], 
  [ [ 3 ], [ [ 3 ], [ [ 1 ] ] ] ], 
  [ [ 2 ], [ [ 2 ], [ [ 1 ] ] ] ], 
  [ [ 6 ], [ [ 6 ], [ [ 1 ] ] ] ], 
  [ [ 6 ], [ [ 6 ], [ [ 1 ] ] ] ], 
  [ [ 2 ], [ [ 2 ], [ [ 1 ] ] ] ], 
  [ [ 10 ], [ [ 10 ], [ [ 1 ] ] ] ], 
  [ [ 2 ], [ [ 2 ], [ [ 1 ] ] ] ], 
  [ [ 6 ], [ [ 6 ], [ [ 1 ] ] ] ], 
  [ [ 5 ], [ [ 5 ], [ [ 1 ] ] ] ], 
  [ [ 7 ], [ [ 7 ], [ [ 1 ] ] ] ], 
  [ [ 3 ], [ [ 3 ], [ [ 1 ] ] ] ], 
  [ [  ], [ [  ], [  ] ] ], 
  [ [  ], [ [  ], [  ] ] ], 
  [ [ 2, 2 ], [ [ 2, 2 ], [ [ 1, 0 ], [ 0, 1 ] ] ] ], 
  [ [ 10 ], [ [ 10 ], [ [ 1 ] ] ] ], 
  [ [ 6 ], [ [ 6 ], [ [ 1 ] ] ] ], 
  [ [ 2 ], [ [ 2 ], [ [ 1 ] ] ] ], 
  [ [ 3 ], [ [ 3 ], [ [ 1 ] ] ] ], 
  [ [  ], [ [  ], [  ] ] ] ]
gap> Dnr:=List(J1H,x->FirstObstructionDnr(J1,x).Dnr); # Obs1Dnr
[ [ [  ], [ [  ], [  ] ] ], 
  [ [  ], [ [ 2 ], [  ] ] ], 
  [ [ 3 ], [ [ 3 ], [ [ 1 ] ] ] ], 
  [ [ 2, 2 ], [ [ 2, 2 ], 
  [ [ 1, 0 ], [ 0, 1 ] ] ] ], 
  [ [ 5 ], [ [ 5 ], [ [ 1 ] ] ] ], 
  [ [  ], [ [ 2 ], [  ] ] ], 
  [ [  ], [ [ 2 ], [  ] ] ], 
  [ [ 3 ], [ [ 6 ], [ [ 2 ] ] ] ], 
  [ [ 7 ], [ [ 7 ], [ [ 1 ] ] ] ], 
  [ [ 2, 2, 2 ], [ [ 2, 2, 2 ], 
  [ [ 1, 0, 0 ], [ 0, 1, 0 ], [ 0, 0, 1 ] ] ] ],
  [ [  ], [ [ 2 ], [  ] ] ], 
  [ [  ], [ [ 2 ], [  ] ] ], 
  [ [ 5 ], [ [ 10 ], [ [ 2 ] ] ] ], 
  [ [ 11 ], [ [ 11 ], [ [ 1 ] ] ] ], 
  [ [ 3 ], [ [ 3 ], [ [ 1 ] ] ] ], 
  [ [ 2, 2 ], [ [ 2, 2 ], [ [ 1, 0 ], [ 0, 1 ] ] ] ], 
  [ [  ], [ [ 2 ], [  ] ] ], 
  [ [ 15 ], [ [ 15 ], [ [ 1 ] ] ] ], 
  [ [ 19 ], [ [ 19 ], [ [ 1 ] ] ] ], 
  [ [ 2, 2 ], [ [ 2, 2 ], [ [ 1, 0 ], [ 0, 1 ] ] ] ], 
  [ [ 3 ], [ [ 3 ], [ [ 1 ] ] ] ], 
  [ [  ], [ [ 2 ], [  ] ] ], 
  [ [ 6 ], [ [ 6 ], [ [ 1 ] ] ] ], 
  [ [ 3 ], [ [ 6 ], [ [ 2 ] ] ] ], 
  [ [  ], [ [ 2 ], [  ] ] ], 
  [ [ 5 ], [ [ 10 ], [ [ 2 ] ] ] ], 
  [ [  ], [ [ 2 ], [  ] ] ], 
  [ [ 3 ], [ [ 6 ], [ [ 2 ] ] ] ], 
  [ [ 5 ], [ [ 5 ], [ [ 1 ] ] ] ], 
  [ [ 7 ], [ [ 7 ], [ [ 1 ] ] ] ], 
  [ [ 3 ], [ [ 3 ], [ [ 1 ] ] ] ], 
  [ [  ], [ [  ], [  ] ] ], 
  [ [  ], [ [  ], [  ] ] ], 
  [ [ 2, 2 ], [ [ 2, 2 ], [ [ 1, 0 ], [ 0, 1 ] ] ] ], 
  [ [ 5 ], [ [ 10 ], [ [ 2 ] ] ] ], 
  [ [ 3 ], [ [ 6 ], [ [ 2 ] ] ] ], 
  [ [ 2 ], [ [ 2 ], [ [ 1 ] ] ] ], 
  [ [ 3 ], [ [ 3 ], [ [ 1 ] ] ] ], 
  [ [  ], [ [  ], [  ] ] ] ]
gap> H1F:=List([1..Length(J1H)],x->AbelianInvariantsGoverH(Nker[x][2],Dnr[x][2]));
# abelian invariants of Nker/Dnr=Obs1N/Obs1Dnr=H^1(G,F) with F=[J_{G/H}]^{fl} 
[ [  ], [ 2 ], [  ], [  ], [  ], [ 2 ], [ 2 ], [ 2 ], [  ], [  ], [ 2 ], 
  [ 2 ], [ 2 ], [  ], [  ], [  ], [ 2 ], [  ], [  ], [  ], [  ], [ 2 ], [  ], 
  [ 2 ], [ 2 ], [ 2 ], [ 2 ], [ 2 ], [  ], [  ], [  ], [  ], [  ], [  ], 
  [ 2 ], [ 2 ], [  ], [  ], [  ] ]
gap> Collected(H1F);
[ [ [  ], 23 ], [ [ 2 ], 16 ] ]
gap> H1F1:=Filtered([1..Length(J1H)],x->H1F[x]=[]); # H^1(G,F)=1
[ 1, 3, 4, 5, 9, 10, 14, 15, 16, 18, 19, 20, 21, 23, 29, 30, 31, 32, 33, 34, 
  37, 38, 39 ]
gap> Length(H1F1);
23
gap> List(J1H{H1F1},StructureDescription);
[ "1", "C3", "C2 x C2", "C5", "C7", "C2 x C2 x C2", "C11", "A4", "D12",
  "C15", "C19", "D20", "C7 : C3", "C2 x A4", "C11 : C5",
  "(C2 x C2 x C2) : C7", "C19 : C3", "A5", "A5", "S3 x D10", "C2 x A5",
  "(C2 x C2 x C2) : (C7 : C3)", "PSL(2,11)" ]
gap> H1F2:=Filtered([1..Length(J1H)],x->H1F[x]=[2]); # H^1(G,F)=Z/2Z
[ 2, 6, 7, 8, 11, 12, 13, 17, 22, 24, 25, 26, 27, 28, 35, 36 ]
gap> Length(H1F2);
16
gap> List(J1H{H1F2},StructureDescription);
[ "C2", "S3", "S3", "C6", "D10", "D10", "C10", "D14", "D22", "C3 x D10",
  "D30", "C5 x S3", "D38", "C7 : C6", "C11 : C10", "C19 : C6" ]

gap> Collected(List(J1H{H1F1},x->StructureDescription(SylowSubgroup(x,2))));
[ [ "1", 10 ], [ "C2 x C2", 8 ], [ "C2 x C2 x C2", 5 ] ]
gap> Collected(List(J1H{H1F2},x->StructureDescription(SylowSubgroup(x,2))));
[ [ "C2", 16 ] ]
\end{verbatim}~\vspace*{-5mm}\\

\begin{verbatim}
gap> FirstObstructionN(J1,J1H[36]).ker; # H=C19:C6
[ [ 6 ], [ [ 6 ], [ [ 1 ] ] ] ]
gap> FirstObstructionDnr(J1,J1H[36]).Dnr;
[ [ 3 ], [ [ 6 ], [ [ 2 ] ] ] ]
gap> HNPtruefalsefn:=x->FirstObstructionDr(J1,x,J1H[36]).Dr[1] in [[2],[6]];
function( x ) ... end
gap> GcsH:=ConjugacyClassesSubgroupsNGHOrbitRep(J1cs,J1H[36]);;
gap> GcsHHNPtf:=List(GcsH,x->List(x,HNPtruefalsefn));;
gap> Collected(List(GcsHHNPtf,Set));
[ [ [ true ], 14 ], [ [ false ], 26 ] ]
gap> GcsHNPfalse:=List(Filtered([1..Length(J1cs)],
> x->false in GcsHHNPtf[x]),y->J1cs[y]);;
gap> Length(GcsHNPfalse);
26
gap> GcsHNPtrue:=List(Filtered([1..Length(J1cs)],
> x->true in GcsHHNPtf[x]),y->J1cs[y]);;
gap> Length(GcsHNPtrue);
14
gap> Collected(List(GcsHNPfalse,x->StructureDescription(Representative(x))));
[ [ "1", 1 ], [ "C10", 1 ], [ "C11", 1 ], [ "C11 : C10", 1 ], 
  [ "C11 : C5", 1 ], [ "C15", 1 ], [ "C19", 1 ], [ "C19 : C3", 1 ], 
  [ "C19 : C6", 1 ], [ "C2", 1 ], [ "C3", 1 ], [ "C3 x D10", 1 ], 
  [ "C5", 1 ], [ "C5 x S3", 1 ], [ "C6", 1 ], [ "C7", 1 ], [ "C7 : C3", 1 ], 
  [ "C7 : C6", 1 ], [ "D10", 2 ], [ "D14", 1 ], [ "D22", 1 ], [ "D30", 1 ], 
  [ "D38", 1 ], [ "S3", 2 ] ]
gap> Collected(List(GcsHNPtrue,x->StructureDescription(Representative(x))));
[ [ "(C2 x C2 x C2) : (C7 : C3)", 1 ], [ "(C2 x C2 x C2) : C7", 1 ], 
  [ "A4", 1 ], [ "A5", 2 ], [ "C2 x A4", 1 ], [ "C2 x A5", 1 ], 
  [ "C2 x C2", 1 ], [ "C2 x C2 x C2", 1 ], [ "D12", 1 ], [ "D20", 1 ], 
  [ "J1", 1 ], [ "PSL(2,11)", 1 ], [ "S3 x D10", 1 ] ]
gap> GcsHNPtrueMin:=MinConjugacyClassesSubgroups(GcsHNPtrue);;
gap> Collected(List(GcsHNPtrueMin,x->StructureDescription(Representative(x))));
[ [ "C2 x C2", 1 ] ]
\end{verbatim}
\newpage

\end{document}